\definecolor{myblue}{rgb}{0,0.4470,0.7410}
\definecolor{myred}{rgb}{0.8500,0.3250,0.0980}
\definecolor{myorange}{rgb}{0.9290,0.6940,0.1250}
\definecolor{mypurple}{rgb}{0.4940,0.1840,0.5560}
\definecolor{mygreen}{rgb}{0.4660,0.6740,0.1880}
\definecolor{mylightblue}{rgb}{0.3010,0.7450,0.9330}
\definecolor{mydarkred}{rgb}{0.6350,0.0780,0.1840}
\pgfplotsset{
  compat=newest,
  table/header=false,
  tick label style={font=\scriptsize},
  label style={font=\scriptsize},
  legend style={font=\scriptsize},
  legend cell align=left,
  colormap={parula}{
    rgb255=(53,42,135)
    rgb255=(15,92,221)
    rgb255=(18,125,216)
    rgb255=(7,156,207)
    rgb255=(21,177,180)
    rgb255=(89,189,140)
    rgb255=(165,190,107)
    rgb255=(225,185,82)
    rgb255=(252,206,46)
    rgb255=(249,251,14)
  }
}
\pgfplotsset{
  myColOne/.style={myblue},
  myColTwo/.style={myred},
  myColThr/.style={myorange},
  myColFou/.style={mypurple},
  myColFiv/.style={mygreen},
  myColSix/.style={mylightblue},
  myColSev/.style={mydarkred}
}
\pgfplotsset{
  myStyOne/.style={myColOne,thick,mark=+},
  myStyTwo/.style={myColTwo,thick,mark=+},
  myStyThr/.style={myColThr,thick,mark=+},
  myStyFou/.style={myColFou,thick,mark=+},
  myStyFiv/.style={myColFiv,thick,mark=+},
  myStySix/.style={myColSix,thick,mark=+},
  myStySev/.style={myColSev,thick,mark=+},
  myStyRes/.style={black,densely dotted},
}
\pgfplotsset{
  myStyOn/.style={myblue,     thick,mark=asterisk},
  myStyTw/.style={myred,      thick,mark=asterisk},
  myStyTh/.style={myorange,   thick,mark=asterisk},
  myStyFo/.style={mypurple,   thick,mark=asterisk},
  myStyFi/.style={mygreen,    thick,mark=asterisk},
  myStySi/.style={mylightblue,thick,mark=asterisk},
  myStySe/.style={mydarkred,  thick,mark=asterisk},
}
\pgfplotsset{
  every axis/.append style={
    label style={font=\footnotesize},
  }
}
\newcommand{\figname}[1]{}
\newcommand{\fignames}[1]{}
\newcommand{\datfile}[1]{dat/#1.dat}
\newcolumntype{L}{>{\raggedright\arraybackslash}X}
\newcolumntype{C}{>{\centering\arraybackslash}X}
\newcolumntype{R}{>{\raggedleft\arraybackslash}X}
\crefname{subsection}{section}{sections}
\Crefname{subsection}{Section}{Sections}
\crefname{assumption}{Assumption}{Assumptions}
\crefname{conjecture}{Conjecture}{Conjectures}
\crefname{example}{Example}{Examples}
\crefname{remark}{Remark}{Remarks}
\crefname{algocf}{Algorithm}{Algorithms}
\newcommand{\TheTitle}%
{Solving a class of infinite-dimensional tensor eigenvalue problems by translational invariant tensor ring approximations}
\newcommand{\TheAuthors}%
{R.~Van~Beeumen, L.~Peri\v{s}a,  D.~Kressner, and C.~Yang}
\title{{\TheTitle}\thanks{Submitted to the editors \today.
\funding{This work was partially funded by the U.S.~Department of Energy,
Office of Science, Office of Advanced Scientific Computing Research,
Scientific Discovery through Advanced Computing (SciDAC) program
under Contract No. DE-AC0205CH11231.}}}
\author{
    Roel~Van~Beeumen%
    \thanks{Applied Mathematics and Computational Research Division,
            Lawrence Berkeley National Laboratory,
            Berkeley, CA 94720, United States.
            (\email{rvanbeeumen@lbl.gov}, \email{cyang@lbl.gov}).}
  \and
    Lana~Peri\v{s}a%
    \thanks{Visage Technologies, Diskettgatan 11A,
            SE-583 35 Link\"{o}ping, Sweden.
            (\email{lana.perisa@visagetechnologies.com}).}
  \and
    Daniel~Kressner%
    \thanks{Department of Mathematics,
            \'Ecole Polytechnique F\'ed\'erale de Lausanne,
            Station 8, 1015 Lausanne, Switzerland.
            (\email{daniel.kressner@epfl.ch}).}
  \and
    Chao~Yang%
    \footnotemark[2]
}
\newcommand{\can}{\mathrm{c}}
\newcommand{\odd}{\mathrm{o}}
\newcommand{\even}{\mathrm{e}}
\newcommand{\TQL}{T_{Q_{\!_L}}\!}
\newcommand{\TQR}{T_{Q_{\!_R}}\!}
\newcommand{\TUL}{T_{U_{\!_L}}\!}
\newcommand{\TUR}{T_{U_{\!_R}}\!}
\newcommand{\TQUL}{T_{Q_{\!_L}\!U_{\!_L}}\!}
\newcommand{\TQUR}{T_{Q_{\!_R}\!U_{\!_R}}\!}
\newcommand{\TUQL}{T_{U_{\!_L}\!Q_{\!_L}}\!}
\newcommand{\TUQR}{T_{U_{\!_R}\!Q_{\!_R}}\!}
\newcommand{\TtQL}{\Tt_{Q_{\!_L}}\!}
\newcommand{\TtQR}{\Tt_{Q_{\!_R}}\!}
\newcommand{\TtQUL}{\Tt_{Q_{\!_L}\!U_{\!_L}}\!}
\newcommand{\TtQUR}{\Tt_{Q_{\!_R}\!U_{\!_R}}\!}
\newcommand{\TtUQL}{\Tt_{U_{\!_L}\!Q_{\!_L}}\!}
\newcommand{\TtUQR}{\Tt_{U_{\!_R}\!Q_{\!_R}}\!}
\begin{document}

\maketitle

% REQUIRED
\begin{abstract}
We examine a method for solving an infinite-dimensional tensor eigenvalue
problem $H x = \lambda x$, where the infinite-dimensional symmetric matrix
$H$ exhibits a translational invariant structure.
We provide a formulation of this type of problem from a numerical linear algebra point of view and describe how a power method applied to 
$e^{-Ht}$ is used to obtain an approximation to the desired eigenvector.
This infinite-dimensional eigenvector is represented in a compact way by a
translational invariant infinite Tensor Ring (iTR).
Low rank approximation is used to keep the cost of subsequent power iterations
bounded while preserving the iTR structure of the approximate eigenvector.
We show how the averaged Rayleigh quotient of an iTR eigenvector approximation can be efficiently computed and introduce a projected residual to monitor its convergence.
In the numerical examples, we illustrate that the norm of this projected iTR residual
can also be used to automatically modify the time step $t$ to ensure accurate
and rapid convergence of the power method.
\end{abstract}

% REQUIRED
\begin{keywords}
infinite eigenvalue problem, tensor eigenvalue problem, tensor ring,
tensor train, translational invariance, matrix product states
\end{keywords}

% REQUIRED
\begin{AMS}
15A18,  % Eigenvalues, singular values, and eigenvectors
15A21,  % Canonical forms, reductions, classification
15A69,  % Multilinear algebra, tensor calculus
65F15   % Eigenvalues, eigenvectors
\end{AMS}

%%%%%%%%%%%%%%%%%%%%%%
%%%  INTRODUCTION  %%%
%%%%%%%%%%%%%%%%%%%%%%
\section{Introduction}
\label{sec:intro}

To motivate the infinite-dimensional eigenvalue problems considered in this work, we will first consider the finite-dimensional eigenvalue problem
$\cH_\ell x_\ell = \lambda\p\ell x_\ell$,
where the symmetric matrix $\cH_\ell \inRR{d^{2\ell+1}}$
is defined as the following sum of Kronecker products
\begin{align}
\cH_\ell &= \sum_{k=-\ell}^{\ell-1} \Ht_k, &
   \Ht_k &= I \otimes \cdots \otimes I \otimes M_{k,k+1}
              \otimes I \otimes \cdots \otimes I,
\label{eq:Hell}
\end{align}
with the $d \times d$ identity matrix $\eye$ and $M_{k,k+1} \inRR{d^2}$.
We will assume that the matrices $M_{k,k+1}$ are the same for every $k$;
the subscript $k,k+1$ is used to merely
indicate the overlapping positions of $M$ in each Kronecker product.
Note that $x_\ell$ can equivalently be viewed as a $d\times \cdots \times d$ tensor of order $2\ell + 1$. This type of tensor eigenvalue problems originates from the study of model quantum many-body systems such as a quantum spin chain with nearest neighbor interactions~\cite{bobl1981}.
The Kronecker structure of $\cH_\ell$ allows for efficiently
representing such matrices and their eigenvectors in, e.g., the Tensor Train (TT) format~\cite{osel2011}.
The TT format of an eigenvector takes the form
\begin{equation} \label{eq:tt}
 x_\ell(i_{-\ell},i_{-\ell+1},\ldots,i_{\ell-1},i_\ell) = 
X_{-\ell}(i_{-\ell}) X_{-\ell+1}(i_{-\ell+1}) \cdots X_{\ell-1}(i_{\ell-1}) X_\ell(i_\ell),
\end{equation}
where $i_k = 1,\ldots,d$, with $k = -\ell,\ldots,\ell$, and each $X_k(i_k)$ is an $r_k \times r_{k+1}$ matrix, with $r_{-\ell} = r_{\ell+1} = 1$.

In this paper, we consider $\cH_\ell$ as $\ell \to \infty$.
Such limits are known in the physics literature as the 
\emph{thermodynamic limit} and they are important for describing macroscopic
properties of quantum materials~\cite{roos1999}.
Taking $\ell \to \infty$ in~\eqref{eq:Hell} \emph{formally} leads to the following infinite-dimensional symmetric matrix
\begin{align}
\bH &= \sum_{k=-\infty}^{+\infty} \bH_k, &
\bH_k &= \cdots \otimes I \otimes I \otimes M_{k,k+1}
                \otimes I \otimes I \otimes \cdots,
\label{eq:H}
\end{align}
where, again, all matrices $M_{k,k+1}$ are identical and, hence, $\bH$ is called \emph{translational invariant}.
It is important to emphasize that we make no attempt to mathematically frame or justify convergence of the formal series~\eqref{eq:H}. In fact, even the eigenvalues of~\eqref{eq:H} are usually infinite in the sense that the eigenvalues of the matrix $\cH_\ell$ defined in~\eqref{eq:Hell} become unbounded as $\ell$ grows.
To avoid this, one considers averaged eigenvalues of the form $\lambda\p\ell/(2\ell)$, where $\lambda\p\ell$ is an eigenvalue of $\cH_\ell$.
We are particularly interested in the smallest among these averaged eigenvalues as $\ell \to \infty$. This 
is referred to in the physics literature as the ground state energy per
site~\cite{aina1994,yaya1966}.
For some specific choices of $M$, the analytical solutions of these
eigenvalue problems are known~\cite{beth1931,hult1938,batc2007}.
However, in general, the desired averaged eigenvalue and its corresponding
eigenvector need to be computed numerically.
One way to study such an infinite problem computationally is to start with a
finite value of $\ell$ and examine how the smallest averaged eigenvalue
changes as $\ell$ increases.
However, applying a standard eigensolver to $\cH_\ell$ severely limits the feasible range of $\ell$ even on a powerful supercomputer, and one may not attain a reasonably good approximation to the limit $\ell \to \infty$.
It is therefore preferable to directly work with the $\bH$ from~\eqref{eq:H}.
Due to the infinite dimensionality, special care
must be exercised when computing quantities such as the Rayleigh quotient and
the residual of an approximate eigenpair of $\bH$.

As the eigenvectors of $\bH$ are infinite-dimensional vectors, they obviously
cannot be computed or stored directly in memory.
On the other hand, we can represent them in a compact form by
making use of the translational invariance property of $\bH$.
Such an invariance property was studied by Bethe~\cite{beth1931} and
Hulth\'{e}n~\cite{hult1938}, known as the \emph{Bethe--Hulth\'{e}n hypothesis}, which
imposes that the elements of the eigenvector are invariant with respect
to a cyclic permutation of the tensor indices, i.e.,
\(
\bfx(\ldots,i_{-1},i_0,i_1,\ldots) = \bfx(\ldots,i_{0},i_1,i_2,\ldots),
\)
where $i_k = 1,\ldots,d$, and the sequence of indices
$\ldots,i_{-1},i_0,i_1,\ldots$ specifies a particular element of the
infinite-dimensional vector $\bfx$.
We (approximately) represent the eigenvector to be computed as a
translational invariant \emph{infinite Tensor Ring} (iTR), defined as
\[
\bfx(\ldots,i_{-1},i_{0},i_1,\ldots)
 = \trace \left[ \prod_{k=-\infty}^{+\infty} X(i_k) \right],
\]
where $i_k = 1,\ldots,d$ for every $k$ and each $X(i_k)$ is an $r \times r$ matrix.
Note that, thanks to the translational invariance, this allows us to store 
and work with $d$ matrices of size $r \times r$, which is manageable
as long as the rank $r$ is not too large.
An iTR can be seen as the infinite limit of a finite size tensor
ring~\cite{zhzh2016} and is also known as a uniform Matrix Product State
(uMPS)~\cite{zava2018}.

We assume that the averaged eigenvalue of interest is simple and propose to apply
a power iteration to $e^{-\bH t}$ for some small and adjustable
parameter $t>0$ to compute the desired eigenpair.
We should note that the power method is not the only method
for solving this type of infinite-dimensional tensor eigenvalue problems.
Alternative methods include the infinite density matrix renormalization (iDMRG)
method \cite{whit1992} and the variational uniform matrix product states (vUMPS) method \cite{zava2018}.
The main purpose of our paper is to formulate this type of problem from
a numerical linear algebra point of view and describe how a standard numerical
algorithm such as the power method can be modified and applied to address
such problems.

In order to implement the power method, we must be able to multiply a
matrix exponential with an iTR and keep the product in iTR form.
Computing $e^{-\bH t} \bfx$, with $\bH$ being an infinite-dimensional matrix and
$\bfx$ an iTR, is in general not possible.
However, the special structure of \eqref{eq:H} allows us to split $\bH$ in
even and odd terms, $\bH_\even$ and $\bH_\odd$, respectively.
For sufficiently small $t$, we can use the Lie product
formula, also known as Suzuki--Trotter splitting,
to approximate $e^{-\bH t} \bfx$ by only local tensor contractions with
a $d^2 \times d^2$ matrix exponential $e^{-Mt}$.
Such contractions can be implemented in a way that the translational invariance
of the contracted tensor is preserved, although the rank generally increases.
To keep the cost of subsequent power iterations bounded, a low-rank
approximation through the use of a truncated singular value decomposition
is applied.

We should note that the approach described above was first developed in the physics community and called
the infinite time evolution block decimation (iTEBD) method~\cite{vida2007}. We
prefer to refer to this approach as a \emph{flexible} power method to clearly articulate the 
mathematical and algorithmic features of this numerical method. The use of the term ``flexible'' highlights the ability of the method to work with approximations of the matrix exponential $e^{-\bH t}$.
In the physics literature, the accuracy of the approximate eigenvalue is assessed by either comparing the approximation with the exact solution or examining the difference between the approximate eigenvalues obtained from two consecutive iterations.  The former is not practical when the exact solution of the problem is unknown. The latter can be misleading when the power iteration starts to stagnate, which does happen as shown in our numerical experiments.
Instead, we will show that the norm of a suitably defined residual can be
used to monitor convergence.
We give a practical procedure for adjusting the parameter $t$ in the flexible
power iteration to ensure rapid and stable convergence and provide a stopping
criterion for terminating the power iteration.
As illustrated in the numerical experiments, such a procedure is effective.

The rest of this paper is organized as follows.
In \secref{sec:not}, we introduce diagrammatic notations for scalars, vectors,
matrices, and tensors, as well as for tensor operations. These notations make it easier to
explain operations performed on tensor rings.
In \secref{sec:itr}, we begin with a formal definition of a single core
translational invariant infinite Tensor Ring, describe its properties,
and show how an iTR can be converted into a so-called canonical form.
We also extend these definitions and properties to 2-core translational
invariant iTRs which are used to represent the approximate eigenvector of $\bH$ 
containing nearest neighbor interactions as in~\eqref{eq:H}.
In \secref{sec:eigval-approx}, we define suitable notions of Rayleigh quotient and residual, and show how they
can be evaluated efficiently with respect to an iTR.
In \secref{sec:fpm}, we present the power iteration for computing
the smallest eigenpair of $\bH$.
We also show how $e^{\bH t} \bfx$ is computed to preserve the iTR structure
and discuss a number of practical computational considerations regarding the
convergence and error assessment of the method.
Three numerical examples are given in \secref{sec:exp} to demonstrate the
effectiveness of the power method and the adaptive strategy
proposed in \secref{sec:fpm}.
Finally, the main conclusions are summarized in \secref{sec:concl}.

%%%%%%%%%%%%%%%%%%
%%%  NOTATION  %%%
%%%%%%%%%%%%%%%%%%
\section{Notation}
\label{sec:not}

Throughout the paper, we denote scalars by lower case Greek characters, vectors
by lower case Roman characters, and matrices and higher order tensors by upper case 
Roman characters, e.g., $\alpha$, $x$, and $M$, respectively.
$\eye[d]$ is the $d \times d$ identity matrix and diagonal matrices are
denoted by upper case Greek characters, e.g., $\Sigma$.
The trace of a matrix $M$ is given by $\trace(M)$ and its vectorization
by $\vecz(M)$.
For infinite-dimensional vectors and matrices we use boldface Roman
characters, e.g., $\bfv$ and $\bM$, respectively.

We will make use of the powerful tensor diagram notation~\cite{penr1971}
 to graphically represent tensor contractions.
The first 4 low-order tensors, i.e., a scalar~$\alpha$, a vector~$v$,
a matrix~$M$, and a 3rd-order tensor~$T$, are depicted as follows
\begin{align*}
\alpha &= \figname{notation-sca}\mytikznotsca{$\alpha$}\,,&
     v &= \figname{notation-vec}\mytikznotvec{$v$}{$i$}\,,&
     M &= \figname{notation-mat}\mytikznotmat{$M$}{$i$}{$j$}\,,&
     T &= \figname{notation-ten}\mytikznotten{$T$}{$i$}{$k$}{$j$}\,,
\end{align*}
where $i,j,k$ are the corresponding tensor indices.
In this graphical notation, tensor contractions can be
represented by connecting the matching lines that correspond to the indices to sum over.
For example, the matrix-vector product, the matrix-matrix product, and the
contraction of two 3rd-order tensors into a so called \emph{supercore}
are given respectively by the following diagrams
\begin{align*}
Mv &= \figname{notation-matvec}\mytikznotmatvec{$M$}{$v$}{$i$}{$j$}\,,&
MN &= \figname{notation-matmat}\mytikznotmatmat{$M$}{$N$}{$i$}{$j$}{$k$}\,,&
\figname{notation-tenten-super}\mytikznottentensuper{$Z$}{$i$}{$kl$}{$m$} &=
\figname{notation-tenten}\mytikznottenten{$X$}{$Y$}{$i$}{$k$}{$j$}{$l$}{$m$}\,,
\end{align*}
where the connections between $M$ and $v$, $M$ and $N$, and $X$ and $Y$
correspond to the sum over index $j$.
Note that the thick leg indicates combining the indices $k$ and $l$ into a
single index.
Because contracting a tensor over one of its indices with the identity matrix
has no effect, we represent the identity matrix $\eye$ by just a line.

Some other commonly used matrix operations such as the trace, the transpose, and
the vectorizations of a matrix are given by the following diagrams
\begin{align*}
 \trace(M) &= \,\figname{notation-trace}\mytikznottrace{$M$}\,,&
       v\T &= \,\figname{notation-vectrans}\mytikznotvectrans{$v$}\,,&
  \vecz(M) &=   \figname{notation-vecM}\mytikznotvecM{$M$}\,,&
\vecz(M)\T &= \,\figname{notation-vecMtrans}\mytikznotvecMtrans{$M$}\,.
\end{align*}
In order to save space, we sometimes rotate the diagrams counterclockwise by 90
degrees.
Throughout the paper, reshaping tensors into matrices and vice versa will play a
key role, e.g., reshaping the following rank-1 matrices into 4th-order
tensors
\begin{align*}
\vecz(\eye) \vecz(\Sigma^2)\T &=
  \figname{notation-vecIvecS2T}\mytikznotvecIvecST{$\Sigma$}\,, &
\vecz(\Sigma^2) \vecz(\eye)\T &=
  \figname{notation-vecS2vecIT}\mytikznotvecSvecIT{$\Sigma$}\,,
\end{align*}
where $\Sigma$ is a diagonal matrix.
We again obtain a matrix by combining respectively the left and right pointing
legs into one index.

%%%%%%%%%%%%%%%%%%%%%%%%%%%%%%%%%%%%%%%%%%%%%%%%%%%%%%
%%%  TRANSLATIONAL INVARIANT INFINITE TENSOR RING  %%%
%%%%%%%%%%%%%%%%%%%%%%%%%%%%%%%%%%%%%%%%%%%%%%%%%%%%%%
\section{Translational invariant infinite tensor ring}
\label{sec:itr}

In this section, we define a translational invariant infinite tensor ring
and discuss its properties that will be used in the next sections to
construct an approximation to the eigenvector associated with the smallest
eigenvalue of the infinite-dimensional matrix $\bH$ defined by \eqref{eq:H}.
We start by defining translational invariance for a (finite) tensor ring.

% ========================== %
% === Finite tensor ring === %
% ========================== %
\subsection{Finite tensor ring}
\label{sec:TR}

The tensor ring decomposition~\cite{khor2011,zhzh2016,mika2020}
is a way to represent high-order tensors (or vectors that can be reshaped into high-order tensors)
by a sequence of 3rd-order tensors multiplied in circular fashion.

%%% DEFINITION %%%
\begin{definition}[Tensor ring]
\label{def:TR}
Let $x_{\ell}$ be an $\ell$th-order tensor of size $d_1\times d_2\times\cdots\times d_{\ell}$. We say that $x_{\ell}$ is in \emph{tensor ring representation} if there exist $3$rd-order tensors $X_k$, $k=1,2\ldots,\ell$ of sizes $r_k\times d_k\times r_{k+1}$, with $r_{k+1}=r_1$, such that $x_{\ell}$ can be represented element-wise as $x_{\ell}(i_1,i_2,\ldots, i_{\ell}) = \trace\left(X_1(i_1)X_2(i_2)\cdots X_{\ell}(i_{\ell})\right)$ for $i_k\in\{1,2,\ldots,d_k\}$, where $X_k(i_k)$ denotes the $i_k$th lateral slice of $X_k$. The integers $r_1, r_2, \ldots, r_{\ell}$ are called \emph{tensor ring ranks}. Graphically, a tensor ring representation corresponds to
\[
x_\ell = \figname{TR}%
\mytikzTRing{$X_1$}{$X_2$}{$X_\ell$}{$i_1$}{$i_2$}{$i_\ell$}.
%\label{eq:TR}
\]
\end{definition}

The 3rd-order tensors $X_k$ are called the \emph{cores} of $x_\ell$, with
$X_k(i_k)$ being the \emph{slice} of the $k$th core.
In the physics literature, a tensor ring is called a matrix product state
with periodic boundary conditions \cite{peve2007}.
When $r_1 = r_{\ell+1} = 1$, a tensor ring becomes a \emph{tensor train}~\eqref{eq:tt},
also known in the physics literature as a matrix product state
with open boundary conditions \cite{peve2007}.

When $d_k = d$, $r_k = r$, and $X_k(i_k) \equiv X(i_k)$ for every $k = 1,2,\ldots,\ell$,
the tensor ring $x_\ell$ is said to be translational invariant.
In this case, the cyclic property of the trace implies
\[
x_\ell(i_1,i_2,\ldots,i_\ell) = x_\ell(i_2,\ldots,i_\ell,i_1),
\]
that is, $x_\ell$ is invariant under cyclic permutations of its indices.
This is a desirable property because the eigenvectors of $\bH$ defined
in~\eqref{eq:H} are known to have such a property for certain $M_{k,k+1}$.
We define a translational invariant (finite) tensor ring as follows.

%%% DEFINITION %%%
\begin{definition}[Translational invariant TR]
\label{def:tiTR}
Let $X$ be an $r\times d \times r$ tensor. We define a translational invariant (finite) tensor ring element-wise as $x_\ell(i_1,i_2,\ldots,i_\ell):= \trace \left[ X(i_1) X(i_2) \cdots X(i_\ell) \right]$ for $i_k\in\{1,2,\ldots,d\}$. We refer to $r$ as the (tensor ring) rank of $x_\ell$. Graphically, this representation corresponds to
\[
x_\ell = \figname{tiTR} \mytikzTRing{$X$}{$X$}{$X$}{$i_1$}{$i_2$}{$i_\ell$}.
%\label{eq:tiTR}
\]
\end{definition}

Note that, due to the translational invariance, storing the tensor ring $x_\ell$ only requires to 
store a single core $X$ of size $r \times d \times r$.

% ============================ %
% === Infinite tensor ring === %
% ============================ %
\subsection{Infinite tensor ring}
\label{sec:iTR}

Practically, it is difficult to generalize \defref{def:TR} to tensor rings with an infinite number
of cores because this would require an infinite amount of data
to represent $\bfx$.
In contrast, it is possible to
generalize \defref{def:tiTR} to infinite tensor rings (iTR).

%%% DEFINITION %%%
\begin{definition}[Translational invariant iTR]
\label{def:iTR}
Let $X$ be an $r\times d \times r$ tensor. We define a translational invariant infinite tensor ring element-wise as
\[
x(\ldots,i_{-1},i_0,i_1,\ldots) = \lim_{\ell\to \infty}\trace \left(\prod_{k=-\ell}^{\ell} X(i_k)\right),
\]
for $i_k\in\{1,2,\ldots,d\}$, assuming that the limit exists, and refer to $r$ as the rank of $x$. Graphically, we represent it as 
\[
\bfx = \figname{iTR1}\mytikziTR{$X$}{$i_{-1}$}{$i_0$}{$i_1$}.
\]
\end{definition}

Note that even though $\bfx$ contains an infinite number of cores, each core is completely defined by the $d$ \emph{slices} of $X$
so that $\bfx$ can be represented by a finite amount of data.
A translational invariant iTR is also known as a uniform matrix product state~\cite{zava2018}.
In the remainder of the paper, we will only work with translational invariant iTRs and just call them iTRs for brevity.

% ======================= %
% === Transfer matrix === %
% ======================= %
\subsection{Transfer matrix}

The core tensor $X$ of an iTR defines a special matrix called the \emph{transfer matrix},
which plays a key role in many operations performed on the iTR, including
Rayleigh quotient and residual calculations.

%%% DEFINITION %%%
\begin{definition}[Transfer matrix]
\label{def:tf-matrix}
Let $\bfx$ be an iTR as in \defref{def:iTR}.
Then we define the transfer matrix $T_X$ associated with $\bfx$ as
the $r^2 \times r^2$ matrix
\begin{equation}
T_X := \sum_{i=1}^d X(i) \otimes X(i) = \,\figname{iTR1-TX}\mytikzT{$X$}\ ,
\label{eq:tf-matrix}
\end{equation}
where $X(i)$ is the $i$th slice of $X$.
\end{definition}

Multiplying $T_X$ with a vector $v \inR[r^2]$ is equivalent to performing the summation $W = \sum_{i = 1}^d X(i) V X(i)^T$, where $V$ is obtained from reshaping $v$ into an $r\times r$ matrix.
It is known that the transfer matrix \eqref{eq:tf-matrix} defines a complete positive map~\cite{choi1975,sanz2011}, which is a linear transformation that maps a positive semi-definite matrix $V$ to another positive semi-definite matrix $XVX^T$.
Under an additional irreducibility assumption, the dominant eigenvalue of $T_X$ is simple (and positive); if we reshape the corresponding left or right eigenvector into a matrix, that matrix is symmetric positive definite~\cite{evho1978,fana1992,peve2007}. In the following we will tacitly assume that the dominant eigenvalue is simple, which allows us to easily obtain the infinite power of $T_X$ from its left and right eigenvectors.

%%% LEMMA %%%
\begin{lemma}
\label{lem:tf-matrix-power}
Let $\eta>0$ be the simple dominant eigenvalue of $T_X$,
with $v_L$ and $v_R$ being the corresponding left and right eigenvectors,
respectively, normalized to satisfy $v_L\T v_R = 1$. Then
\begin{equation}
\lim_{k \to \infty} \left( \frac{T_X}{\eta} \right)^k = v_R v_L\T.
\label{eq:tf-matrix-lem}
\end{equation}
\end{lemma}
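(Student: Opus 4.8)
Write $A := T_X/\eta$, so that $1$ is the dominant eigenvalue of $A$ with left and right eigenvectors $v_L$ and $v_R$ and $v_L\T v_R = 1$. The decisive structural input I would establish first is that $1$ is not merely a \emph{simple} eigenvalue of $A$, but the \emph{unique} eigenvalue of modulus $1$; equivalently, every other eigenvalue $\mu$ of $A$ satisfies $|\mu| < 1$. This is precisely where the injectivity hypothesis enters: $T_X = \sum_{i=1}^d X(i)\otimes X(i)$ is the matrix representation of the completely positive map $\rho \mapsto \sum_i X(i)\,\rho\,X(i)\T$, and for a primitive (injective) completely positive map the Perron--Frobenius theory for such maps guarantees a strictly dominant, simple, positive eigenvalue. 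Alternatively, one combines Conjecture~\ref{lem:tfmat-eig} with the observation that injectivity excludes a second eigenvalue on the spectral circle $|\lambda| = \eta$. I expect this to be the main obstacle of the whole argument; the remainder is elementary linear algebra, and indeed if one is content to simply \emph{assume} strict dominance the proof reduces to the two steps below.

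Given strict dominance, I would introduce the spectral projector associated with the eigenvalue $1$, namely $P := v_R v_L\T$. The normalization $v_L\T v_R = 1$ gives $P^2 = v_R (v_L\T v_R) v_L\T = v_R v_L\T = P$, and $A v_R = v_R$ together with $v_L\T A = v_L\T$ yields $A P = P A = P$. Consequently $A(I-P) = (I-P)A = A - P$, and a one-line induction shows $(A-P)^k = A^k - P$ for all $k \ge 1$: if it holds for $k$, then $(A-P)^{k+1} = (A^k - P)(A-P) = A^{k+1} - A^k P - P A + P^2 = A^{k+1} - P$. Thus $A^k = P + (A-P)^k$ for every $k \ge 1$.

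It remains to show $(A-P)^k \to 0$. Decompose $\mathbb{R}^{r^2}$ into the $A$-invariant subspaces $\operatorname{range}(P) = \operatorname{span}\{v_R\}$ and $\ker(P) = \{x : v_L\T x = 0\}$. On $\operatorname{range}(P)$ the operator $A - P$ acts as $A - \mathrm{id} = 0$, since $A v_R = v_R$; on $\ker(P)$ it agrees with $A$, whose eigenvalues there are exactly the remaining eigenvalues of $A$, all of modulus $< 1$ by strict dominance. Hence $\operatorname{spec}(A-P) = \{0\} \cup \big(\operatorname{spec}(A)\setminus\{1\}\big)$ has spectral radius strictly less than $1$, so $\|(A-P)^k\| \to 0$ (e.g.\ by Gelfand's formula, or by passing to the Jordan form of $A-P$). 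Combining, $A^k = P + (A-P)^k \to P = v_R v_L\T$, that is, $(T_X/\eta)^k \to v_R v_L\T$, which is \eqref{eq:tf-matrix-lem}.
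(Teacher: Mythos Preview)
Your argument is correct. The route, however, differs from the paper's. The paper simply writes down a full eigendecomposition $T_X = W_R \Lambda W_L^{\mathsf H}$ with $W_L^{\mathsf H} W_R = I$, observes that $(T_X/\eta)^k = W_R(\Lambda/\eta)^k W_L^{\mathsf H}$, and lets $k\to\infty$ so that only the block corresponding to $\eta$ survives, yielding $v_R v_L\T$. By contrast, you never diagonalize $T_X$: you isolate the rank-one spectral projector $P=v_R v_L\T$, prove the clean identity $A^k = P + (A-P)^k$, and then kill the remainder via the spectral radius of $A-P$.

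What each approach buys: the paper's computation is a two-line affair but tacitly assumes $T_X$ is diagonalizable and that ``$\eta$ simple'' already forces every other eigenvalue to lie strictly inside the spectral circle; neither point is argued. Your version is more robust on both counts---the projector/spectral-radius argument works verbatim for nondiagonalizable $T_X$, and you make explicit that the injectivity (primitivity) of the underlying completely positive map is what rules out a second eigenvalue of modulus $\eta$. One cosmetic remark: since the subdominant eigenvalues of $T_X$ may be complex, your invariant-subspace decomposition should be phrased over $\mathbb{C}^{r^2}$ rather than $\mathbb{R}^{r^2}$; nothing else in your argument changes.
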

%%% PROOF %%%
\begin{proof}
By the assumptions, there are nonsingular matrices of the form
$W_L = \begin{bmatrix} v_L & \bullet \end{bmatrix}$, $W_R = \begin{bmatrix} v_R & \bullet \end{bmatrix}$ such that
\begin{align}
W_L\H T_X &= \Lambda W_L\H, &
  T_X W_R &= W_R \Lambda, &
  \Lambda &= \begin{bmatrix} \eta & 0 \\ 0 & \Lambda_2 \end{bmatrix}, &
W_L\H W_R &= \eye,
\end{align}
where the spectral radius of $\Lambda_2$ is smaller than $1$.
Using that $W_R\inv = W_L\H$, this yields
\[
\lim_{k \to \infty} \left( \frac{T_X}{\eta} \right)^k
 = \lim_{k \to \infty} \frac{1}{\eta^k} W_R \Lambda^k W_R\inv
 = \lim_{k \to \infty} \frac{1}{\eta^k} W_R \Lambda^k W_L\H
 = v_R v_L\T,
\]
which completes the proof.
\end{proof}

%%% COROLLARY %%%
\begin{corollary}[Normalized iTR]
\label{cor:normalized-iTR}
Let $\bfx$ be an iTR as in \defref{def:iTR}.
Then $\bfx$ is normalized, i.e., $\bfx\T \bfx = 1$,
if its corresponding transfer matrix $T_X$
has a simple dominant eigenvalue $\eta = 1$.
\end{corollary}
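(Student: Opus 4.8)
The plan is to reduce the a priori ill-defined infinite inner product $\bfx\T\bfx$ to a limit of traces of powers of the transfer matrix $T_X$, and then read the claim off from the spectral gap implied by $\eta$ being the simple dominant eigenvalue. First I would fix the meaning of $\bfx\T\bfx$: I interpret it as $\lim_{\ell\to\infty} x_\ell\T x_\ell$, where $x_\ell$ is the length-$\ell$ translational invariant tensor ring of \defref{def:tiTR} built from the same core $X$, i.e.\ the natural finite-size truncation of the iTR. (This is also the reading under which ``normalized'' is meaningful at all, since for a strictly dominant $\eta$ this limit is finite and nonzero only when $\eta=1$.)

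Next I would compute $x_\ell\T x_\ell$ in closed form. Using $x_\ell(i_1,\dots,i_\ell) = \trace[X(i_1)\cdots X(i_\ell)]$ (and that $\bfx$ is real), together with the Kronecker identities $\trace(A)\trace(B) = \trace(A\otimes B)$ and $(AB)\otimes(CD) = (A\otimes C)(B\otimes D)$, one obtains
\[
x_\ell\T x_\ell
= \sum_{i_1,\dots,i_\ell} \trace\big[X(i_1)\cdots X(i_\ell)\big]^2
= \trace\Big[\big(\textstyle\sum_{i=1}^{d} X(i)\otimes X(i)\big)^{\ell}\Big]
= \trace\big(T_X^{\ell}\big),
\]
the last equality by \defref{def:tf-matrix}; diagrammatically this is nothing but closing the ladder of $\ell$ copies of $T_X$ into a ring, as in the figure for \eqref{eq:tf-matrix}.

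Then comes the spectral step. Let $\eta = \lambda_1, \lambda_2, \dots, \lambda_{r^2}$ be the eigenvalues of $T_X$ listed with algebraic multiplicity. Since $\eta$ is the simple dominant eigenvalue we have $\lambda_1 = \eta$ with multiplicity one and $|\lambda_j| < \eta = 1$ for every $j \ge 2$. Because $\trace(T_X^{\ell}) = \sum_{j}\lambda_j^{\ell}$ holds regardless of whether $T_X$ is diagonalizable, it follows that $\trace(T_X^{\ell}) = 1 + \sum_{j\ge 2}\lambda_j^{\ell} \to 1$ as $\ell \to \infty$, hence $\bfx\T\bfx = \lim_{\ell\to\infty} x_\ell\T x_\ell = 1$. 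When $T_X$ is moreover an injective linear map, one can instead quote Lemma~\ref{lem:tf-matrix-power} directly: $\trace(T_X^{\ell}) = \eta^{\ell}\,\trace[(T_X/\eta)^{\ell}] \to \trace(v_R v_L\T) = v_L\T v_R = 1$, using the normalization $v_L\T v_R = 1$.

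I do not expect any analytic obstacle of substance here; the only real care is in the setup. One must make sure the infinite inner product is legitimately defined as the limit of the finite truncations and that this limit exists, and one must note that $\trace(T_X^{\ell})$ is exactly the power sum of the eigenvalues even in the presence of Jordan blocks, so that the subdominant part of the spectrum genuinely decays to zero and only the $\eta^\ell$ term survives.
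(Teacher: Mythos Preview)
Your proof is correct and follows essentially the same route as the paper: reduce $\bfx\T\bfx$ to $\lim_{\ell\to\infty}\trace(T_X^{\ell})$ and then evaluate the limit. The paper simply quotes \lemref{lem:tf-matrix-power} to get $\trace[\lim_k T_X^k]=\trace(v_Rv_L\T)=v_L\T v_R=1$, which is precisely the alternative you mention at the end; your primary argument via $\trace(T_X^{\ell})=\sum_j\lambda_j^{\ell}$ is a slightly more elementary variant that has the advantage of not needing the injectivity hypothesis of \lemref{lem:tf-matrix-power}, but otherwise the two arguments are the same.
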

%%% PROOF %%%
\begin{proof}
The proof directly follows from \thref{lem:tf-matrix-power}, i.e.,
\[
\bfx\T \bfx
 = \,\figname{iTR1-norm}\mytikziTRnorm{$X$}
 = \trace\left[ \lim_{k \to \infty} \left(T_X\right)^k \right]
 = \trace\left[ v_R v_L\T \right]
 = v_L\T v_R
 = 1,
\]
where we also used the trace property $\trace(AB) = \trace(BA)$.
\end{proof}

When the dominant eigenvalue of the transfer matrix $\eta$ is simple, $v_L\T v_R \neq 0$ holds.
We will assume that the eigenvectors $v_L$ and $v_R$ satisfy the normalization condition
$v_L\T v_R = 1$.
A graphical depiction of the dominant left and right eigenvalue relation, and
the normalization condition for $v_L =: \vecz(V_L)$,
                                $v_R =: \vecz(V_R)$ are shown in
\cref{fig:tfmtx-eig}.

%%% FIGURE %%%
\begin{figure}[hbtp]
\centering
\fignames{iTR1-TX-eig-}%
~\hfill%
$\myvcenter{\mytikzTleft{$X$}{$V_L$}}
 = \eta\ \myvcenter{\mytikzlefteigv{$V_L$}}$
\hfill%
$\myvcenter{\mytikzvLTvR{$V_L$}{$V_R$}} = 1$
\hfill%
$\myvcenter{\mytikzTright{$X$}{$V_R$}}
 = \eta\,\myvcenter{\mytikzrighteigv{$V_R$}}$
\hfill~%
\caption{Dominant eigenvalue relations of the transfer matrix
and the normalization condition $v_L\T v_R = 1$.
}
\label{fig:tfmtx-eig}
\end{figure}

% =============================== %
% === Canonical decomposition === %
% =============================== %
\subsection{Canonical decomposition}
\label{sec:can}

Because we can insert the product of any nonsingular matrix $S$ and its
inverse between two consecutive cores of an iTR and redefine each slice
as $S^{-1}X(i)S$, $i=1,...,d$, the representation of an iTR is not unique.
It is useful to define a canonical form that makes it easier to describe
computations performed on iTRs\footnote{We use a slightly different definition of the canonical form compared to~\cite{zava2018} in order to ensure that a canonical iTR is always normalized to 1. The (mixed) canonical form introduced in \cite{zava2018} normalizes to $\normfrob{\Sigma^2}^2$.}.

%%% DEFINITION %%%
\begin{definition}[Canonical form]
\label{def:iTR-can}
Let $\bfx$ be an iTR as in \defref{def:iTR}. We say that $\bfx$ is in canonical form if it can be represented as
\begin{equation}
\bfx(\ldots,i_{-1},i_0,i_1,\ldots)
  := \trace \left[ \prod_{k=-\infty}^{+\infty} Q(i_k) \Sigma \right],% \\
\label{eq:iTR-can}
\end{equation}
and graphically
\[
\bfx = \,\figname{iTR1c}\mytikziTRc{$Q$}{$\Sigma$}{$i_{-1}$}{$i_0$}{$i_1$}
\]
where $Q(i) \inRR{r}$, for $i = 1,\ldots,d$, and $\Sigma \inRR{r}$ is a diagonal matrix with decreasing non-negative real numbers on its diagonal and $\normfrob{\Sigma} = 1$, such that the following left and right orthogonality conditions hold
\begin{align}
\sum_{i=1}^d Q_L(i)\T Q_L(i) &= \eta\eye, &
\sum_{i=1}^d Q_R(i) Q_R(i)\T &= \eta\eye,
\label{eq:can-condL-condR}
\end{align}
with $Q_L(i) := \Sigma Q(i)$, $Q_R(i) := Q(i) \Sigma$,
and $\eta \inR$ being the dominant eigenvalue of the transfer matrix $T_X$.
\end{definition}

We call the 3rd-order tensor $Q$ the orthogonal core of $\bfx$ and $\Sigma$ the corresponding singular value matrix, since it can be obtained from a singular value decomposition.
The 3rd-order tensors $Q_L$ and $Q_R$, defined in \eqref{def:iTR-can},
are called the left and right orthogonal cores, respectively.

Before explaining how to compute the canonical form, we will list some properties.
First, note that, by definition, $Q$ and $\Sigma$ satisfy the relations
\begin{equation}
\Sigma Q(i) \Sigma = Q_L(i) \Sigma = \Sigma Q_R(i),
\label{eq:rel-QLR}
\end{equation}
for $i = 1,\ldots,d$.
The left and right canonical transfer matrices are defined as follows.

%%% DEFINITION %%%
\begin{definition}[Canonical transfer matrices]
\label{def:can-tf-matrix}
Let $\bfx$ be an iTR in canonical form as in \defref{def:iTR-can}.
Then we define its left and right canonical transfer matrices $T_{Q_L}$ and $T_{Q_R}$ as the following $r^2 \times r^2$ matrices
\begin{align}
\TQL &:= \sum_{i=1}^d Q_L(i) \otimes Q_L(i)\ = \,
\figname{iTR1c-TQL}\mytikzTL{$Q$}{$\Sigma$}\ ,
\label{eq:can-tf-matrix-left} \\[5pt]
\TQR &:= \sum_{i=1}^d Q_R(i) \otimes Q_R(i)\ = \,
\figname{iTR1c-TQR}\mytikzTR{$Q$}{$\Sigma$}\ ,
\label{eq:can-tf-matrix-right}
\end{align}
with $Q_L$ and $Q_R$ defined as in \eqref{def:iTR-can}.
\end{definition}

The canonical transfer matrices, associated with an iTR in canonical form,
as well as their dominant left and right eigenvectors,
have the following fixed form.

%%% LEMMA %%%
\begin{lemma}
\label{lem:can-tf-matrix-eigv}
Let $\bfx$ be an iTR in canonical form as in \defref{def:iTR-can} with its corresponding canonical transfer matrices as in \defref{def:can-tf-matrix}.
Then,
\begin{align}
\vecz(\eye)\T \TQL &= \eta \vecz(\eye)\T, &
\TQR\vecz(\eye) &= \eta \vecz(\eye), \label{eq:TQ-vecI} \\
\TQL\vecz(\Sigma^2) &= \eta \vecz(\Sigma^2), &
\vecz(\Sigma^2)\T \TQR &= \eta \vecz(\Sigma^2)\T, \label{eq:TQ-vecS2}
\end{align}
with $\vecz(\eye)$ being the dominant left eigenvector of $\TQL$
and the dominant right eigenvector of $\TQR$,
and $\vecz(\Sigma^2)$ being the dominant right eigenvector of $\TQL$
and the dominant left eigenvector of $\TQR$.
\end{lemma}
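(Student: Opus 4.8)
The plan is to separate the statement into a purely algebraic part---that $\vecz(\eye)$ and $\vecz(\Sigma^2)$ are eigenvectors of $\TQL$ and $\TQR$ with eigenvalue $\eta$ on the sides claimed---and a dominance part---that $\eta$ is in fact the dominant eigenvalue and that these are the dominant eigenvectors. For the algebraic part the only tool I need is the vectorization identity $(A\otimes B)\vecz(C)=\vecz(BCA\T)$ together with its transpose $\vecz(C)\T(A\otimes B)=\vecz(B\T C A)\T$. Applying these to the definitions \eqref{eq:can-tf-matrix-left}--\eqref{eq:can-tf-matrix-right} and using the right and left orthogonality relations \eqref{eq:can-condL-condR} gives
\[
\TQR\vecz(\eye)=\sum_{i=1}^d\vecz(Q_R(i)Q_R(i)\T)=\vecz(\eta\eye)=\eta\vecz(\eye),
\]
and, symmetrically, $\vecz(\eye)\T\TQL=\sum_i\vecz(Q_L(i)\T Q_L(i))\T=\vecz(\eta\eye)\T=\eta\vecz(\eye)\T$, which is \eqref{eq:TQ-vecI}. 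For \eqref{eq:TQ-vecS2} I would first rewrite \eqref{eq:rel-QLR} as $Q_L(i)\Sigma=\Sigma Q_R(i)$, so that $Q_L(i)\Sigma^2 Q_L(i)\T=(Q_L(i)\Sigma)(Q_L(i)\Sigma)\T=\Sigma Q_R(i)Q_R(i)\T\Sigma$; summing over $i$ and invoking the right orthogonality relation again yields $\TQL\vecz(\Sigma^2)=\vecz(\Sigma(\eta\eye)\Sigma)=\eta\vecz(\Sigma^2)$. The companion identity $\vecz(\Sigma^2)\T\TQR=\eta\vecz(\Sigma^2)\T$ follows in the same way from $\Sigma Q_R(i)=Q_L(i)\Sigma$ and the left orthogonality relation.

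It then remains to argue that $\eta$ is the \emph{dominant} eigenvalue of $\TQL$ and $\TQR$ and that the exhibited vectors are the corresponding dominant eigenvectors. Here I would exploit the gauge freedom noted before \defref{def:iTR-can}: the cores $Q_L$ and $Q_R$ represent the same iTR $\bfx$ and, by \eqref{eq:rel-QLR}, are related by the fixed similarity $Q_R(i)=\Sigma\inv Q_L(i)\Sigma$, so $\TQR=(\Sigma\otimes\Sigma)\inv\TQL(\Sigma\otimes\Sigma)$; moreover the original core $X$ of \defref{def:iTR} is related to $Q_L$ (hence to $Q_R$) by some fixed invertible gauge transformation $G$, so $\TQL=(G\otimes G)\inv T_X(G\otimes G)$. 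Therefore $T_X$, $\TQL$, and $\TQR$ share the same spectrum; since $\eta$ is by definition the dominant eigenvalue of $T_X$ and it is simple, it is the dominant eigenvalue of $\TQL$ and $\TQR$ as well, and the one-dimensionality of the $\eta$-eigenspace forces the eigenvectors computed above to be precisely the claimed dominant left and right eigenvectors. I would also record the consistency relation $\vecz(\eye)\T\vecz(\Sigma^2)=\trace(\Sigma^2)=\normfrob{\Sigma}^2=1$, which shows the dominant left and right eigenvectors of each canonical transfer matrix are normalized against each other, mirroring the condition $v_L\T v_R=1$ in \thref{lem:tf-matrix-power} and Figure~\ref{fig:tfmtx-eig}.

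The routine manipulations in the first step are not the difficulty; the main obstacle is the dominance claim. To make it rigorous one must know that passing from an arbitrary iTR core to its canonical cores $Q_L$, $Q_R$ is realized by a genuine \emph{similarity} of the transfer matrix, and this rests on the dominant eigenvalue of $T_X$ being simple (equivalently, on $T_X$ being injective, as in \thref{lem:tf-matrix-power}): only then is the gauge orbit rich enough to connect $X$ with $Q_L$ and $Q_R$, and only then can ``dominant'' be transferred from $T_X$ to the canonical transfer matrices. Without that hypothesis the algebra still shows $\eta$ is \emph{an} eigenvalue with the stated eigenvectors, but not that it is the largest one. I would therefore carry the simplicity assumption explicitly through this proof, in line with \thref{lem:tf-matrix-power} and the surrounding development.
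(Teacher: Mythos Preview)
Your algebraic derivation of \eqref{eq:TQ-vecI}--\eqref{eq:TQ-vecS2} is correct and matches the paper's approach in spirit: both use the vec identity together with the orthogonality relations \eqref{eq:can-condL-condR}. The only cosmetic difference is in the $\vecz(\Sigma^2)$ cases, where the paper factors $Q_L(i)=\Sigma Q(i)$, $Q_R(i)=Q(i)\Sigma$ and then left-multiplies by $\Sigma\otimes\Sigma$, whereas you work directly with the relation $Q_L(i)\Sigma=\Sigma Q_R(i)$ from \eqref{eq:rel-QLR}; both routes are equally short.

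Where you genuinely go beyond the paper is in the dominance argument. The paper's proof establishes only that $\vecz(\eye)$ and $\vecz(\Sigma^2)$ are eigenvectors with eigenvalue $\eta$; it does not verify that $\eta$ is the dominant eigenvalue of $\TQL$ and $\TQR$, nor that these are the dominant eigenvectors. Your similarity argument---that $Q_L$ and $Q_R$ are each obtained from $X$ by an invertible gauge change, so that $\TQL$, $\TQR$, and $T_X$ share the same spectrum---fills this gap, and your explicit reliance on the simplicity hypothesis (as in \thref{lem:tf-matrix-power}) is the right way to justify transferring ``dominant'' across the similarity. In that sense your proof is more complete than the one in the paper.
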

%%% PROOF %%%
\begin{proof}
The proof for the dominant right eigenvector of $\TQR$ directly follows from
vectorizing the second equality of \eqref{eq:can-condL-condR}:
\begin{equation*}
\eta \vecz(\eye) = \vecz\left( \sum_{i=1}^d Q_R(i) Q_R(i)\T \right)
 = \left( \sum_{i=1}^d Q_R(i) \otimes Q_R(i) \right) \vecz(\eye)
 = \TQR \vecz(\eye),
\end{equation*}
where we used~\eqref{eq:can-tf-matrix-right} as well as relations between Kronecker products and vectorization:
\begin{equation}
\vecz(ABC) = (C\T \otimes A) \vecz(B).
\label{eq:vectrick}
\end{equation}
For the proof of the dominant left eigenvector of $\TQL$, we start from
vectorizing the first equality of \eqref{eq:can-condL-condR} and again apply
\eqref{eq:vectrick} to obtain
\begin{equation}
\eta \vecz(\eye) = \vecz\left( \sum_{i=1}^d Q_L(i)\T Q_L(i) \right)
 = \left( \sum_{i=1}^d Q_L(i)\T \otimes Q_L(i)\T \right) \vecz(\eye).
\label{eq:can-proof-TQL}
\end{equation}
Transposing \eqref{eq:can-proof-TQL} and using \eqref{eq:can-tf-matrix-left}
completes the proof of \eqref{eq:TQ-vecI}.

In order to prove that $\vecz(\Sigma^2)$ is the dominant right eigenvector of
$\TQL$, we start again from the vectorization of the right equality of
\eqref{eq:can-condL-condR},
\begin{equation}
\vecz\left( \sum_{i=1}^d Q(i) \Sigma^2 Q(i)\T \right)
 = \left( \sum_{i=1}^d Q(i) \otimes Q(i) \right) \vecz(\Sigma^2)
 = \eta \vecz(\eye),
\label{eq:can-proof-TQR}
\end{equation}
where we substituted $Q_R(i) = Q(i)\Sigma$ and applied \eqref{eq:vectrick}.
Next, we multiply both sides of \eqref{eq:can-proof-TQR} from the left by $\Sigma \otimes \Sigma$, yielding
\begin{equation*}
\underbrace{\left( \Sigma \otimes \Sigma \right)
\left( \sum_{i=1}^d Q(i) \otimes Q(i) \right)}_{=\ \TQL} \vecz(\Sigma^2)
 = \eta \left( \Sigma \otimes \Sigma \right) \vecz(\eye)
 = \eta \vecz(\Sigma^2).
\end{equation*}
Finally, in order to prove that $\vecz(\Sigma^2)$ is also the dominant left
eigenvector of $\TQR$, we start from the vectorization of the left equality
of \eqref{eq:can-condL-condR},
\begin{equation}
\vecz\left( \sum_{i=1}^d Q(i)\T \Sigma^2 Q(i) \right)
 = \left( \sum_{i=1}^d Q(i)\T \otimes Q(i)\T \right) \vecz(\Sigma^2)
 = \eta \vecz(\eye),
\label{eq:can-proof-TQL2}
\end{equation}
where we substituted $Q_L(i) = \Sigma Q(i)$ and made use of the identity \eqref{eq:vectrick}.
Next, we multiply both sides of \eqref{eq:can-proof-TQL2} from the left by $\Sigma \otimes \Sigma$ followed by a transposition, yielding
\begin{equation*}
\vecz(\Sigma^2)\T \underbrace{\left( \sum_{i=1}^d Q(i) \otimes Q(i) \right)
\left( \Sigma \otimes \Sigma \right)}_{=\ \TQR}
 = \eta \left[\left( \Sigma \otimes \Sigma \right) \vecz(\eye) \right]\T
 = \eta \vecz(\Sigma^2)\T,
\end{equation*}
which proves \eqref{eq:TQ-vecS2} and completes the proof.
\end{proof}

%%% FIGURE %%%
\begin{figure}[b!]
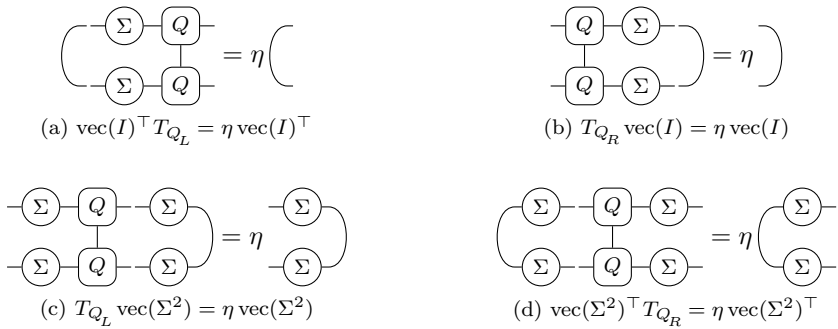

\centering
\fignames{iTR1c-TQ-eig-}%
\hfill%
%
% (a) vecI^T * TQL = eta * vecI^T  (= left orthogonality condition)
\subfloat[$\vecz(I)\T \TQL = \eta \vecz(I)\T$\label{fig:can-TQL-left}]%
{$\quad\myvcenter{\mytikzTQLleft{$Q$}{$\Sigma$}}
 = \eta\,\myvcenter{\mytikzvecIT}\quad$}%
\hfill%
\hfill%
%
% (b) TQR * vecI = eta * vecI  (= right orthogonality condition)
\subfloat[$\TQR \vecz(I) = \eta \vecz(I)$\label{fig:can-TQR-right}]%
{$\quad\myvcenter{\mytikzTQRleft{$Q$}{$\Sigma$}}
 = \eta\,\myvcenter{\mytikzvecI}\quad$}%
\hfill~\\[5pt]%
\hfill%
%
% (c) TQL * vecS^2 = eta * vecS^2
\subfloat[$\TQL\vecz(\Sigma^2) = \eta \vecz(\Sigma^2)$\label{fig:can-TQL-right}]%
{$\myvcenter{\mytikzTQLright{$Q$}{$\Sigma$}}
 = \eta\,\myvcenter{\mytikzvecSS{$\Sigma$}}$}%
\hfill%
\hfill%
%
% (c) vecS^2^T * TQR = eta * vecS^2^T
\subfloat[$\vecz(\Sigma^2)\T \TQR = \eta \vecz(\Sigma^2)\T$\label{fig:can-TQR-left}]%
{$\myvcenter{\mytikzTQRright{$Q$}{$\Sigma$}}
 = \eta\,\myvcenter{\mytikzvecSST{$\Sigma$}}$}%
\hfill%
\caption{Eigenvalue decompositions of the canonical transfer matrices.}
\label{fig:can-tf-matrix-eigv}
\end{figure}

The eigenvalue decompositions of \lemref{lem:can-tf-matrix-eigv} are
graphically shown in \figref{fig:can-tf-matrix-eigv}.
Note also that the left and right orthogonality conditions
\eqref{eq:can-condL-condR}, defined by the canonical form,
can be expressed in terms of the canonical transfer matrices and
are equivalent to \eqref{eq:TQ-vecI}.
Using the orthogonality conditions \eqref{eq:can-condL-condR}, we notice that
computing the canonical form of an iTR corresponds to finding matrices $Q(i)$
and a diagonal $\Sigma$ so that
\begin{equation}
\sum_{i=1}^d Q(i)\T \Sigma^2 Q(i) = \eta\eye = \sum_{i=1}^d Q(i) \Sigma^2 Q(i)\T.
\label{eq:gramians}
\end{equation}
\algref{alg:can} shows how the canonical decomposition of an iTR can be
computed \cite{orvi2008}.
The complexity of \algref{alg:can} is dominated by step~1, which requires the
computation of the dominant left and right eigenvectors of the $r^2 \times r^2$
transfer matrix $T_X$.
In step~2, we use that the matricization of the dominant left and right eigenvectors, $V_L$ and $V_R$, are positive definite matrices \cite{fana1992}.
Using the properties $L\inv = R \Sigmat$, $R\inv = \Sigmat L$,
$V_R = R \Sigmat^2 R\T$, and $V_L = L\T \Sigmat^2 L$,
in combination with the identity \eqref{eq:vectrick},
we can show that $Q$ and $\Sigma$ obtained in step~6 of \algref{alg:can}
satisfy \eqref{eq:gramians}.
Finally, we normalize the canonical iTR in step~7.

%%% ALGORITHM %%%
\begin{algorithm2e}[hbtp]
\caption{Canonical decomposition of iTR}%
\label{alg:can}%
\SetKwInOut{input}{Input}
\SetKwInOut{output}{Output}
\BlankLine
\input{3rd-order tensor $X$}
\output{3rd-order tensor $Q$ and diagonal matrix $\Sigma$}
\BlankLine
\nl Dominant left and right eigenvectors of transfer matrix $T_X$:
\[
\mathrm{vec}(V_L)\T\,T_X = \eta\,\mathrm{vec}(V_L)\T, \qquad
  T_X\,\mathrm{vec}(V_R) = \eta\,\mathrm{vec}(V_R).
\]\\
\nl
\begin{minipage}{0.45\textwidth}
Eigendecompositions of $V_L$ and $V_R$:
\end{minipage}\hfill%
\begin{minipage}{0.45\textwidth}\centering
\(
V_L = U_L \Lambda_L U_L\T, \quad V_R = U_R \Lambda_R U_R\T.
\)
\end{minipage}\\[5pt]
\nl
\begin{minipage}{0.45\textwidth}
Form:
\end{minipage}\hfill%
\begin{minipage}{0.45\textwidth}\centering
\(
\Ut_L = U_L \Lambda_L^{1/2}, \quad \Ut_R = U_R \Lambda_R^{1/2}.
\)
\end{minipage}\\[5pt]
\nl
\begin{minipage}{0.45\textwidth}
Singular value decomposition:
\end{minipage}\hfill%
\begin{minipage}{0.45\textwidth}\centering
\(
V \Sigmat W\T = \Ut_L\T \Ut_R.
\)
\end{minipage}\\[5pt]
\nl
\begin{minipage}{0.45\textwidth}
Form:
\end{minipage}\hfill%
\begin{minipage}{0.45\textwidth}\centering
\(
L = W\T \Ut_R\inv, \quad R = \Ut_L\invT V.
\)
\end{minipage}\\[5pt]
\nl
\begin{minipage}{0.45\textwidth}
Canonical form:
\end{minipage}\hfill%
\begin{minipage}{0.45\textwidth}\centering
\(
Q(i) = \normfrob{\Sigmat} \, L X(i) R, \quad
\Sigma = \Sigmat/\normfrob{\Sigmat}.
\)
\end{minipage}\\[5pt]
\nl
\begin{minipage}{0.45\textwidth}
Normalization:
\end{minipage}\hfill%
\begin{minipage}{0.45\textwidth}\centering
\(
Q(i) = Q(i)/\sqrt{\eta}.
\)
\end{minipage}\\[5pt]
\end{algorithm2e}

% --- Frame matrix --- %
\subsection{Frame matrix}

Starting from an iTR in canonical form as in \defref{def:iTR-can},
we can rewrite it as a product of an infinite-dimensional matrix with
orthogonal columns, called the \emph{frame matrix} \cite{dokh2014,krst2014},
and a finite vector of length equal to the number of elements in 1 core.

%%% DEFINITION %%%
\begin{definition}[Frame matrix]
\label{def:iTR-frame-matrix}
Let $\bfx$ be a normalized iTR in canonical form as in \defref{def:iTR-can}.
Then we define its $k$th frame matrix as follows
\begin{equation}
\bF_{\neq k} := \,\figname{iTR1c-frame}%
\mytikziTRframe{$Q$}{$\Sigma$}{i}{j}{k-2}{k-1}{k}{k+1}\ .
\label{eq:iTR-frame}
\end{equation}
\end{definition}
Note that by this definition the frame matrix $\bF_{\neq k}$ has an infinite
number of rows, which are represented as downwards pointing legs, and $dr^2$ columns,
which are represented as upwards pointing legs.
Now the iTR $\bfx$ can also be written as
\fignames{iTR1c-center-core-}%
\begin{align}
                 \bfx &= \bF_{\neq k} \vecz(Q_\can), &
\mytikzcore{$Q_\can$} &= \mytikzcentercore{$\Sigma$}{$Q$}{$\Sigma$}\,,
\label{eq:bfx-frame-center-core}
\end{align}
where $Q_\can$ is called the \emph{canonical center core}.
Another important property of the frame matrix is that it has orthonormal
columns, i.e., $\bF_{\neq k}\T \bF_{\neq k} = \eye[dr^2]$, since
\begingroup
\allowdisplaybreaks
\begin{align*}
\bF_{\neq k}\T \bF_{\neq k}
 &= \,\figname{iTR1c-frame-orth1}\mytikziTRframeorthA{$Q$}{$\Sigma$}\ ,\\[5pt]
 &= \,\figname{iTR1c-frame-orth2}\mytikziTRframeorthB{$Q$}{$\Sigma$}\ ,\\[5pt]
 &= \,\figname{iTR1c-frame-orth3}\mytikziTRframeorthC{$Q$}{$\Sigma$}\,
  = \eye[dr^2],
\end{align*}
\endgroup
where we used \lemref{lem:tf-matrix-power} in the first step, i.e.,
$\lim_{k \to \infty} (\TQR)^k = \vecz(\eye) \vecz(\Sigma^2)\T$,
in order to replace the infinite repetition of $\TQR$ on both sides of the
free indices, followed by
$\vecz(\eye)\T \TQL = \vecz(\eye)\T$, and
$\vecz(\Sigma^2)\T \vecz(\eye) = 1$, respectively.

Due to the orthogonality of the frame matrix, this matrix will turn out
to be an important tool to extract and update cores in an efficient way.
Further, this matrix will also play a role in splitting up supercores.

% ======================================= %
% === 2-core translational invariance === %
% ======================================= %
\subsection{2-core translational invariance}
\label{sec:iTR2}

The translational invariance principle can be
generalized to allow the unit of invariance to include more than one core. Such
a unit is sometimes referred to as a \emph{supercore}.
We now define a 2-core translational invariant iTR\cite{zava2018}, which we will refer to as iTR2
to distinguish it from iTRs with a single core.

%%% DEFINITION %%%
\begin{definition}[2-core translational invariant iTR]
\label{def:iTR2}
An infinite-dimensional vector $\bfx$ is said to be in
\emph{2-core translational invariant iTR format} if it can be represented as
\begin{equation}
\bfx(\ldots,i_0,i_1,i_2,i_3,\ldots)
  := \trace \left[ \prod_{k=-\infty}^{+\infty} X(i_{2k})\,Y(i_{2k+1}) \right],
\label{eq:iTR2}
\end{equation}
and graphically
\[
\bfx = \,\figname{iTR2}\mytikziTRR{$X$}{$Y$}{$i_0$}{$i_1$}{$i_2$}{$i_3$}
\]
where all indices $i_k$ run from 1 to $d$ and each $X(i)$ and $Y(i)$ are
matrices of size $r \times r$.
\end{definition}

In contrast to an iTR with a single core, there are now 2 transfer matrices associated with
every iTR2, playing a central role in the iTR2 operations.

%%% DEFINITION %%%
\begin{definition}[iTR2 transfer matrices]
\label{def:tf-matrices}
Let $\bfx$ be an iTR2 as in \defref{def:iTR2}.
Then we define the transfer matrices $T_{XY}$ and $T_{YX}$ associated with
$\bfx$ as the $r^2 \times r^2$ matrices
\begin{align}
T_{XY} &:= \sum_{i=1}^d \sum_{j=1}^d \Big[ X(i) \otimes X(i) \Big]
                                     \Big[ Y(j) \otimes Y(j) \Big] = \,
\figname{iTR2-TXY}\mytikzTT{$X$}{$Y$}\ , \label{eq:tf-matrix-XY}\\[5pt]
%%%
T_{YX} &:= \sum_{i=1}^d \sum_{j=1}^d \Big[ Y(i) \otimes Y(i) \Big]
                                     \Big[ X(j) \otimes X(j) \Big] = \,
\figname{iTR2-TYX}\mytikzTT{$Y$}{$X$}\ , \label{eq:tf-matrix-YX}
\end{align}
where $X(i)$ is the $i$th slice of $X$ and $Y(j)$ the $j$th slice of $Y$.
\end{definition}

Note that the transfer matrices $T_{XY}$ and $T_{YX}$ of an iTR2
can also be expressed in terms of the 1-core transfer matrices, i.e.,
$T_{XY} = T_X T_Y$ and $T_{YX} = T_Y T_X$,
where $T_X$ is the transfer matrix formed by only using the core $X$
and $T_Y$ the transfer matrix formed by only using the core $Y$.

%%% LEMMA %%%
\begin{lemma}
\label{lem:can2-tf-matrices}
Let $\bfx$ be an iTR2 as in \defref{def:iTR2}
and assume that $T_X$ and $T_Y$ are nonsingular.
Then its corresponding transfer matrices $T_{XY}$ and $T_{YX}$,
defined in \defref{def:tf-matrices}, have the same eigenvalues.
\end{lemma}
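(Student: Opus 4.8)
The plan is to use the factorization $T_{XY} = T_X T_Y$ and $T_{YX} = T_Y T_X$ noted just before the statement, and then invoke the standard fact that for any two square matrices $A$ and $B$ of the same size, $AB$ and $BA$ have the same characteristic polynomial. Applying this with $A = T_X$ and $B = T_Y$ immediately gives that $T_{XY} = T_X T_Y$ and $T_{YX} = T_Y T_X$ have the same eigenvalues (with multiplicities).

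For completeness I would include a short justification of the $AB$ vs.\ $BA$ fact rather than merely citing it. Since the hypothesis assumes $T_X$ (and $T_Y$) nonsingular, there is a particularly clean argument: $T_{YX} = T_Y T_X = T_X\inv (T_X T_Y) T_X = T_X\inv T_{XY} T_X$, so $T_{YX}$ and $T_{XY}$ are similar and hence have identical spectra (indeed identical Jordan structure). This is the route I would actually write, because the nonsingularity assumption is already in the statement and makes the similarity explicit; the general $AB$/$BA$ argument via $\det(\lambda I - AB) = \det(\lambda I - BA)$ would also work but is slightly more roundabout here.

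The key steps, in order: first, recall $T_{XY} = T_X T_Y$ and $T_{YX} = T_Y T_X$ (already established in the text, following from \eqref{eq:tf-matrix-XY}--\eqref{eq:tf-matrix-YX} and \eqref{eq:tf-matrix}); second, use nonsingularity of $T_X$ to write $T_{YX} = T_X\inv T_{XY} T_X$; third, conclude that similar matrices share eigenvalues. That is essentially the whole argument. One could optionally remark that nonsingularity of $T_Y$ gives the symmetric conjugation $T_{XY} = T_Y\inv T_{YX} T_Y$ as well, but it is not needed.

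There is no real obstacle here; the statement is a one-line consequence of a textbook fact once the product decomposition of the transfer matrices is in hand. If anything, the only point requiring a word of care is making sure the factorization $T_{XY} = T_X T_Y$ is genuinely available — but it follows directly from the mixed-product property of the Kronecker product, $[X(i)\otimes X(i)][Y(j)\otimes Y(j)] = (X(i)Y(j))\otimes(X(i)Y(j))$, summed over $i$ and $j$, which is exactly how $T_{XY}$ was defined in \defref{def:tf-matrices}. So the proof is short and the emphasis should simply be on stating the similarity cleanly.
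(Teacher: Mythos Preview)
Your proposal is correct and rests on the same key observation as the paper's proof, namely $T_{XY}=T_XT_Y$ and $T_{YX}=T_YT_X$. The paper then argues at the level of individual eigenpairs (if $T_{XY}v=\lambda v$, left-multiply by $T_Y$ to get $T_{YX}(T_Yv)=\lambda(T_Yv)$, with $T_Yv\neq0$ by nonsingularity, and symmetrically in the other direction), whereas you invoke the similarity $T_{YX}=T_X^{-1}T_{XY}T_X$ directly. Your route is marginally slicker and yields equality of the full Jordan structure, not just the eigenvalue sets; the paper's eigenvector-transfer argument is the same idea unpacked and only establishes equality of spectra as sets.
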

%%% PROOF %%%
\begin{proof}
We will show that every eigenvalue of $T_{XY}$ is also an eigenvalue $T_{YX}$,
and vice versa.
First, assume that $(\lambda,v)$ is an eigenpair of $T_{XY}$,
\(
T_{XY} v = T_X T_Y v = \lambda v.
\)
Multiplying both sides from the left by $T_Y$, yields
\(
T_Y T_X (T_Y v) = \lambda (T_Y v),
\)
hence, the pair $(\lambda,w := T_Y v)$ is an eigenpair of $T_{YX}$.

Similarly, assume that $(\lambda,w)$ is an eigenpair of $T_{YX}$,
\(
T_{YX} w = T_Y T_X w = \lambda w.
\)
Multiplying both sides from the left by $T_X$, yields again
\(
T_X T_Y (T_X w) = \lambda (T_X w),
\)
hence, the pair $(\lambda,v := T_X w)$ is an eigenpair of $T_{XY}$.
\end{proof}

We now define the canonical form of an iTR2.
Since the canonical form is mainly used for normalized iTRs and iTR2s,
we will assume that the dominant eigenvalue of its transfer matrices $\eta = 1$.

%%% DEFINITION %%%
\begin{definition}[iTR2 canonical form]
\label{def:iTR-can2}
Let $\bfx$ be an iTR2 as in \defref{def:iTR2}, then $\bfx$ is in canonical form if it can be represented as
\begin{equation}
\bfx(\ldots,i_0,i_1,i_2,i_3,\ldots)
  := \trace \left[ \prod_{k=-\infty}^{+\infty} Q(i_{2k})\,\Sigma\,U(i_{2k+1})\,\Omega\,\right],
\label{eq:iTR-can2}
\end{equation}
and graphically
\[
\bfx = \,\figname{iTR2c}%
\mytikziTRRc{$Q$}{$\Sigma$}{$U$}{$\Omega$}{$i_0$}{$i_1$}{$i_2$}{$i_3$}
\]
where $Q(i),U(i) \inRR{r}$, for $i = 1,\ldots,d$, and $\Sigma,\Omega \inRR{r}$ are diagonal matrices with decreasing non-negative real numbers on its diagonal and $\normfrob{\Sigma} = \normfrob{\Omega} = 1$, such that the following left and right orthogonality conditions hold
\begin{align}
\sum_{i=1}^d Q_L(i)\T Q_L(i) &= \eye, &
\sum_{i=1}^d Q_R(i) Q_R(i)\T &= \eye, \label{eq:can2-condLR1} \\
\sum_{i=1}^d U_L(i)\T U_L(i) &= \eye, &
\sum_{i=1}^d U_R(i) U_R(i)\T &= \eye, \label{eq:can2-condLR2}
\end{align}
with $Q_L(i) := \Omega Q(i)$, $Q_R(i) := Q(i) \Sigma$, $U_L(i) := \Sigma U(i)$,
and $U_R(i) := U(i) \Omega$.
\end{definition}

Due to a translational invariance of 2 cores for an iTR2, its canonical form
gives rise to 4 canonical transfer matrices with structured dominant
eigenvectors as detailed in the following definition and lemma.

%%% DEFINITION %%%
\begin{definition}[iTR2 canonical transfer matrices]
\label{def:can2-tf-matrix}
Let $\bfx$ be an iTR2 in canonical form as in \defref{def:iTR-can2}.
Then we define its left canonical transfer matrices as the $d^2 \times d^2$ matrices $\TQUL$ and $\TUQL$,
and its right canonical transfer matrices as $\TQUR$ and $\TUQR$, i.e.,
\begin{align}
\TQUL &= \TQL \TUL, &
\TUQL &= \TUL \TQL, &
\TQUR &= \TQR \TUR, &
\TUQR &= \TUR \TQR,
\end{align}
where $Q_L$, $Q_R$, $U_L$, and $U_R$ are given in \defref{def:iTR-can2}.
\end{definition}

%%% LEMMA %%%
\begin{lemma}
\label{lem:can2-tf-matrix-eigv}
Let $\bfx$ be an iTR2 in canonical form as in \defref{def:iTR-can2} and
its corresponding canonical transfer matrices as in \defref{def:can2-tf-matrix}.
Then,
\begin{align}
  \TQUL \vecz(\Omega^2) &= \vecz(\Omega^2), &
\vecz(\Omega^2)\T \TQUR &= \vecz(\Omega^2)\T, \label{eq:TQU-eigv} \\
  \TUQL \vecz(\Sigma^2) &= \vecz(\Sigma^2), &
\vecz(\Sigma^2)\T \TUQR &= \vecz(\Sigma^2)\T, \label{eq:TUQ-eigv}
\end{align}
or $\vecz(\Omega^2)$ being the dominant right eigenvector of $\TQUL$
and the dominant left eigenvector $\TQUR$,
and $\vecz(\Sigma^2)$ being the dominant right eigenvector of $\TUQL$
and the dominant left eigenvector $\TUQR$.
\end{lemma}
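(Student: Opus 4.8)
The plan is to recognize this as the two-core counterpart of \lemref{lem:can-tf-matrix-eigv} and to exploit the factorizations $\TQUL = \TQL\TUL$, $\TQUR = \TQR\TUR$, $\TUQL = \TUL\TQL$, $\TUQR = \TUR\TQR$ from \defref{def:can2-tf-matrix}. The argument has two stages: first establish how each single-core factor maps the vectors $\vecz(\Sigma^2)$ and $\vecz(\Omega^2)$ into one another, and then chain these relations through the products.

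For the first stage, note that $Q_L(i) = \Omega Q(i)$ and $Q_R(i) = Q(i)\Sigma$ immediately give the two-core analog of \eqref{eq:rel-QLR}, namely $Q_L(i)\Sigma = \Omega Q_R(i)$, and similarly $U_L(i)\Omega = \Sigma U_R(i)$. Using these together with the right orthogonality conditions in \eqref{eq:can2-condLR1}--\eqref{eq:can2-condLR2}, I would compute
\[
\sum_{i=1}^d Q_L(i)\,\Sigma^2\,Q_L(i)\T = \sum_{i=1}^d \bigl(\Omega Q_R(i)\bigr)\bigl(\Omega Q_R(i)\bigr)\T = \Omega\Bigl(\sum_{i=1}^d Q_R(i) Q_R(i)\T\Bigr)\Omega = \Omega^2,
\]
and analogously $\sum_i U_L(i)\,\Omega^2\,U_L(i)\T = \Sigma^2$; applying the ``vec trick'' \eqref{eq:vectrick} turns these into
\[
\TQL\,\vecz(\Sigma^2) = \vecz(\Omega^2), \qquad \TUL\,\vecz(\Omega^2) = \vecz(\Sigma^2).
\]
Dually, using the left orthogonality conditions together with the same identities, I would show $\sum_i Q_R(i)\T\Omega^2 Q_R(i) = \Sigma^2$ and $\sum_i U_R(i)\T\Sigma^2 U_R(i) = \Omega^2$, which after the vec trick and a transposition become $\vecz(\Omega^2)\T\TQR = \vecz(\Sigma^2)\T$ and $\vecz(\Sigma^2)\T\TUR = \vecz(\Omega^2)\T$.

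The second stage is then immediate: $\TQUL\vecz(\Omega^2) = \TQL\bigl(\TUL\vecz(\Omega^2)\bigr) = \TQL\vecz(\Sigma^2) = \vecz(\Omega^2)$, and likewise $\TUQL\vecz(\Sigma^2) = \TUL\vecz(\Omega^2) = \vecz(\Sigma^2)$, $\vecz(\Omega^2)\T\TQUR = \vecz(\Sigma^2)\T\TUR = \vecz(\Omega^2)\T$, and $\vecz(\Sigma^2)\T\TUQR = \vecz(\Omega^2)\T\TQR = \vecz(\Sigma^2)\T$, which is \eqref{eq:TQU-eigv}--\eqref{eq:TUQ-eigv}. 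That the eigenvalue $1$ is the dominant one, so that these are genuinely the dominant eigenvectors, follows from the normalization and simplicity assumptions built into the canonical form, since the canonical transfer matrices are conjugate to the transfer matrices $T_{XY}$ and $T_{YX}$ (cf.\ \lemref{lem:can2-tf-matrices}).

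I expect the only real obstacle to be bookkeeping: keeping straight which of the four orthogonality conditions feeds which single-core factor, and noticing that the products are arranged precisely so that the ``output'' singular-value matrix of the inner factor ($\Sigma^2$ for $\TUL$) matches the ``input'' of the outer factor ($\TQL$) --- this is exactly why $\TQUL$, rather than some other product of two one-core canonical transfer matrices, has $\vecz(\Omega^2)$ as an eigenvector. Phrasing the whole computation through the identities $Q_L(i)\Sigma = \Omega Q_R(i)$ and $U_L(i)\Omega = \Sigma U_R(i)$ also sidesteps any inversion of $\Sigma$ or $\Omega$, so possible zero singular values cause no difficulty.
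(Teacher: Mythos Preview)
Your proposal is correct and is precisely the intended expansion of the paper's one-line ``similar to \lemref{lem:can-tf-matrix-eigv}'': you use the orthogonality conditions \eqref{eq:can2-condLR1}--\eqref{eq:can2-condLR2} together with the vec identity \eqref{eq:vectrick}, and the factorizations $\TQUL=\TQL\TUL$, etc., from \defref{def:can2-tf-matrix} make the chaining step transparent. The only minor addition beyond what a literal transcription of the single-core proof would give is your explicit use of the intertwining relations $Q_L(i)\Sigma=\Omega Q_R(i)$ and $U_L(i)\Omega=\Sigma U_R(i)$, which cleanly avoids inverting $\Sigma$ or $\Omega$; this is a nice touch but not a genuinely different route.
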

%%% PROOF %%%
\begin{proof}
The proof is similar to the one of \lemref{lem:can-tf-matrix-eigv}.
\end{proof}

The canonical decomposition of an iTR2 can be computed by making use of
\algref{alg:can} applied to the supercore, followed by a singular value
decomposition of the matricization of the obtained canonical center supercore.
This procedure is summarized in \algref{alg:can2}.
In case $d \ll r$, the complexity of \algref{alg:can2} is dominated by step 2,
which requires again computing the dominant left and right eigenvectors of the
$r^2 \times r^2$ transfer matrix in \algref{alg:can}.

%%% ALGORITHM %%%
\begin{algorithm2e}[hbtp]
\caption{Canonical decomposition of iTR2}%
\label{alg:can2}%
\SetKwInOut{input}{Input}
\SetKwInOut{output}{Output}
\BlankLine
\input{3rd-order tensors $X$ and $Y$}
\output{3rd-order tensors $Q$ and $U$,
        and diagonal matrices $\Sigma$ and $\Omega$}
\BlankLine
\fignames{alg-iTR2c-}%
\nl Form supercore $\cZ$ as the contraction of the cores $X$ and $Y$. \\
\nl Use \algref{alg:can} on $\cZ$ resulting in the supercore $\cQ$ and
    the diagonal matrix $\Omega$. \\
\nl Form canonical center supercore $\cQ_\can$ and reshape into a matrix $M$:
\[
\mytikzcentersupercorecombined{$\Omega$}{$\cQ$}{$\Omega$}\, = \,
\mytikzsupercoreind{$\cQ_\can$}{$j_1$}{$i_1i_2$}{$j_2$}
\qquad \longrightarrow \qquad
\mytikzsupermatind{$M$}{$j_1i_1$}{$j_2i_2$}\,.
\]\\
\nl Singular value decomposition of $M$ and reshape back into cores:
\[
V\,\Sigma\,W\T = M \qquad \longrightarrow \qquad
\mytikzTMTind{$V$}{$\Sigma$}{$W$}{$j_1$}{$i_1$}{$i_2$}{$j_2$}\,.
\]\\
\nl Canonical form: \quad
\(
Q(i) = \Omega\inv V(i), \quad U(j) = W(j) \Omega\inv.
\)
\end{algorithm2e}

%%%%%%%%%%%%%%%%%%%%%%%%%%%%%%%%%%
%%%  EIGENVALUE APPROXIMATION  %%%
%%%%%%%%%%%%%%%%%%%%%%%%%%%%%%%%%%
\section{Eigenvalue approximation}
\label{sec:eigval-approx}

In this section, we define and study appropriate (averaged) notions of the Rayleigh quotient and the residual for approximating the (averaged) eigenvalues of the infinite-dimensional matrix $\bH$ defined in~\eqref{eq:H}.
For this purpose, it will be useful to consider the graphical notation of 
$\bH_k = \cdots I \otimes M_{k,k+1}
               \otimes I \cdots$,
the summand of $\bH$ that acts on sites $k$ and $k+1$:
\[
\bH_k = \,\cdots\quad \myvcenter{\figname{bHk}\mytikzbHk{$\qquad M\qquad$}{k-2}{k-1}{k}{k+1}{k+2}{k+3}}\ \cdots,
\]
where $M_{k,k+1} \equiv M \inRR{d^2}$.
Using this notation, it can be observed that the infinite-dimensional matrix-vector product
$\bH_k \bfx$, where $\bfx$ is an iTR, corresponds to contracting
along all $i$ indices.

% ========================= %
% === Rayleigh quotient === %
% ========================= %
\subsection{Rayleigh quotient per site}
\label{sec:rq}

Let $\bfx$ be a normalized iTR that constitutes an approximation to an eigenvector of $\bH$. The \emph{Rayleigh quotient per site} of
$\bH$ with respect to $\bfx$ can be defined as the following limit
\begin{equation} \label{eq:deftheta}
\theta := \lim_{\ell \to \infty} \frac{x_\ell\T \cH_\ell x_\ell}{2\ell}
        = \lim_{\ell \to \infty} \frac{1}{2\ell}
          \sum_{k=-\ell}^{\ell-1} x_\ell\T \Ht_k x_\ell.
\end{equation}
Here, $\bH_\ell$ is the finite-dimensional Hamiltonian defined in~\eqref{eq:Hell} and 
$x_\ell$ is the finite tensor ring (with the same core tensor $X$) from \cref{def:tiTR}. Because of translational invariance, each summand $x_\ell\T \Ht_k x_\ell$ is actually identical and, therefore, the limit exists. This limit is equal to 
\begin{equation}
\theta  = \bfx\T\bH_{k}\bfx = \,\myvcenter{\figname{iTR1-rq-inf}\mytikzrqinf{$X$}{$X$}}\,,
\label{eq:rq-inf}
\end{equation}
for arbitrary $k$. Once again because of translational invariance, the value of $\bfx\T\bH_{k}\bfx$ is independent of $k$. The quantity~\eqref{eq:rq-inf} can be viewed as a Rayleigh quotient \emph{per site} and we will simply refer to it as the iTR Rayleigh quotient in the following.
The following theorem describes how such a Rayleigh quotient can be calculated
in terms of the eigenvectors of the transfer matrix $T_X$ associated with $\bfx$.

%%% THEOREM %%%
\begin{theorem}[iTR Rayleigh quotient]
\label{th:iTR-rq}
Let $\bfx$ be a normalized iTR as defined in \defref{def:iTR},
with the corresponding transfer matrix $T_X$ from \defref{def:tf-matrix}.
Then the iTR Rayleigh quotient for an infinite-dimensional matrix $\bH$
of the form \eqref{eq:H} can be computed as follows
\begin{equation}
\theta = \,\myvcenter{\figname{iTR1-rq}\mytikzrq{$X$}{$X$}{$V_L$}{$V_R$}}\ ,
\label{eq:rq}
\end{equation}
where $V_L$ and $V_R$ are, respectively, the matricizations of the left and
right dominant eigenvectors of $T_X$. 
\end{theorem}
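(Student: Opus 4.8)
The plan is to evaluate a single term $\bfx\T\bH_k\bfx$ of the divergent series \eqref{eq:rq0}, to observe that translational invariance makes it independent of $k$ and equal to the scalar $\theta$ drawn in \eqref{eq:rq}, and then to note that the Rayleigh quotient defined in an average sense, $\lim_{n\to\infty}\frac{1}{2n+1}\sum_{k=-n}^{n}\bfx\T\bH_k\bfx$, is the Ces\`aro mean of a constant sequence and therefore equals $\theta$ as well.

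First I would read off the diagram \eqref{eq:rq-inf}. At every site other than $k$ and $k{+}1$ the top and bottom copies of $X$ are contracted over their shared physical index, which is exactly the transfer matrix $T_X$ of \defref{def:tf-matrix}; at the two sites $k,k{+}1$ the two top and the two bottom slices are joined through $M$, producing an $r^2\times r^2$ matrix — call it $N$ — on the bond space, whose left and right bond legs are the only remaining free legs. Closing the bi-infinite chain on these legs means that truncating it to a ring of $\ell$ sites yields $\trace[T_X^{\ell-2}N]$, so that $\bfx\T\bH_k\bfx=\lim_{\ell\to\infty}\trace[T_X^{\ell-2}N]$. Since $\bfx$ is normalized, the dominant eigenvalue of $T_X$ equals $\eta=1$ by Corollary~\ref{cor:normalized-iTR}, and then Lemma~\ref{lem:tf-matrix-power} gives $\lim_{\ell\to\infty}T_X^{\ell-2}=v_R v_L\T$ with the eigenvectors normalized so that $v_L\T v_R=1$. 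As $A\mapsto\trace[AN]$ is continuous on the finite-dimensional space of $r^2\times r^2$ matrices, the limit passes inside and
\[
\bfx\T\bH_k\bfx=\trace[v_R v_L\T N]=v_L\T N\,v_R .
\]

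It then remains to recognise $v_L\T N v_R$ as the right-hand side of \eqref{eq:rq}. Writing $v_L=\vecz(V_L)$ and $v_R=\vecz(V_R)$ and undoing the Kronecker products hidden in $N$ by the ``vec trick'' \eqref{eq:vectrick}, the scalar $v_L\T N v_R$ is exactly $V_L$ contracted on the left bond, two copies of $X$ above and two below all joined through $M$, and $V_R$ contracted on the right bond, which is precisely \eqref{eq:rq} with $\theta:=v_L\T N v_R$. Since $N$ is built only from $M$ and $X$, it does not depend on $k$; hence every summand of \eqref{eq:rq0} equals $\theta$, and the average-sense Rayleigh quotient equals $\theta$, which completes the proof.

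The only genuine obstacle I anticipate is the identity $\bfx\T\bH_k\bfx=\lim_{\ell\to\infty}\trace[T_X^{\ell-2}N]$, i.e.\ assigning a precise meaning to the trace-closed bi-infinite contraction in \eqref{eq:rq-inf} and showing it is correctly computed by this finite-ring limit — essentially the same delicate point that already underlies Corollary~\ref{cor:normalized-iTR}. Everything afterwards (the eigenvector limit from Lemma~\ref{lem:tf-matrix-power}, the cyclic trace identity, and the diagrammatic rewriting via \eqref{eq:vectrick}) is routine.
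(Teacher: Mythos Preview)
Your proof is correct and follows essentially the same route as the paper: identify the bi-infinite contraction as a trace involving powers of $T_X$ and a local $r^2\times r^2$ block (your $N$ is the paper's $\widetilde M$), invoke \lemref{lem:tf-matrix-power} together with the normalization $\eta=1$ from \corref{cor:normalized-iTR} to collapse the transfer-matrix powers to $v_Rv_L\T$, and finish with the cyclic trace identity. The only cosmetic difference is that the paper writes two separate infinite products of $T_X$ on either side of $\widetilde M$ (each replaced by $v_Rv_L\T$, then simplified via $v_L\T v_R=1$), whereas you take a single finite-ring limit $\trace[T_X^{\ell-2}N]$; the two formulations are equivalent and your explicit Ces\`aro-mean remark makes the ``average sense'' interpretation a bit more transparent than the paper's proof does.
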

%%% PROOF %%%
\begin{proof}
We first rewrite \eqref{eq:rq-inf} in matrix notation
\begin{align}
\bfx\T\bH_{k}\bfx
 &= \trace\left[ \left( \prod_{\ell=-\infty}^{k-1} T_X \right)
                 \Mt
                 \left( \prod_{\ell=k+2}^{+\infty} T_X \right) \right],&
\Mt &:=
 \,\myvcenter{\figname{iTR1-rq-Mt}\mytikzMt{$X$}{$X$}}\,.
\label{eq:iTR-rq-Mt}
\end{align}
It follows from \lemref{lem:tf-matrix-power} that
\begin{equation*}
\bfx\T\bH_{k}\bfx
 = \trace\left[ v_R v_L\T\,\Mt\,v_R v_L\T \right]
 = \trace\left[ v_L\T\,\Mt\,v_R v_L\T v_R \right]
 = v_L\T \Mt v_R
 = \theta,
\end{equation*}
where we consecutively used the trace cyclic property $\trace(XYZ) = \trace(YZX)$
and the normalization of the left and right eigenvectors $v_L\T v_R = 1$.
\end{proof}

If $\bfx$ is a normalized iTR, the evaluation of the Rayleigh quotient only
requires multiplying the matrix $\Mt$ \eqref{eq:iTR-rq-Mt} from the left
and right, respectively, with the left and right dominant eigenvectors of its
corresponding transfer matrix $T_X$.
In case $\bfx$ is given in canonical form, the Rayleigh quotient simplifies
further because there is no need to compute the left and right dominant
eigenvectors of the transfer matrix, see \lemref{lem:can-tf-matrix-eigv}.

%%% COROLLARY %%%
\begin{corollary}[Canonical iTR Rayleigh quotient]
\label{cor:iTRc-rq}
Let $\bfx$ be a normalized iTR in canonical form as in
\defref{def:iTR-can} and its corresponding canonical transfer matrices as in
\defref{def:can-tf-matrix}.
Then the iTR Rayleigh quotient associated with the infinite-dimensional
matrix $\bH$ \eqref{eq:H} can be computed as follows
\begin{equation}
\theta = \,\myvcenter{\figname{iTR1c-rq}\mytikzrqcan{$Q$}{$Q$}{$\Sigma$}{$\Sigma$}}\ ,
\label{eq:iTRc-rq}
\end{equation}
where $Q$ and $\Sigma$ are given in \defref{def:iTR-can}.
\end{corollary}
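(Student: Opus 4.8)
The plan is to specialize \thref{th:iTR-rq} to the canonical representation, using the structure of the canonical transfer matrices to eliminate the need for solving eigenvalue problems. First I would recall that, by \thref{th:iTR-rq}, the Rayleigh quotient can be written as $\theta = v_L\T \Mt v_R$, where $\Mt$ is the $r^2 \times r^2$ "local operator" matrix obtained by contracting the $d^2\times d^2$ matrix $M$ with two consecutive cores of $\bfx$ and of $\bfx\T$, and $v_L,v_R$ are the (normalized) dominant left and right eigenvectors of the transfer matrix $T_X$. The key observation is that, in canonical form, the natural partition point for the Rayleigh quotient falls between a right orthogonal core and a left orthogonal core, so that the contractions on either side of the local block collapse via \lemref{lem:can-tf-matrix-eigv}.

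The main steps, in order, would be: (1) Write the local block in canonical coordinates. Since \eqref{eq:rel-QLR} gives $\Sigma Q(i)\Sigma = Q_L(i)\Sigma = \Sigma Q_R(i)$, the two-core window $X(i_k)X(i_{k+1})$ appearing in \eqref{eq:rq-inf} can be rewritten (after inserting compensating $\Sigma$, $\Sigma\inv$ factors) as $Q_R(i_k)\,Q(i_{k+1})\,\Sigma$ sandwiched appropriately, or more symmetrically using $\Sigma$ on the far left of the window. (2) To the left of the window, the infinite product of transfer matrices becomes an infinite power of the left canonical transfer matrix $\TQL$ acting on the free indices; by \lemref{lem:can-tf-matrix-eigv}, $\vecz(\eye)\T$ is the dominant left eigenvector of $\TQL$ with eigenvalue $\eta = 1$ (the iTR being normalized, \corref{cor:normalized-iTR}), so this product projects onto $\vecz(\eye)$ on the left boundary — diagrammatically, a "cap". (3) To the right of the window, the infinite product of $\TQR$ projects onto its dominant right eigenvector $\vecz(\eye)$, giving a "cap" on the right. (4) Collecting the leftover $\Sigma$ (or $\Sigma^2$) factors from step (1) together with these two caps yields exactly the finite diagram \eqref{eq:iTRc-rq}: two $Q$ cores, two $\Sigma$ matrices on the appropriate legs, the matrix $M$ contracted over the physical indices, and the whole thing closed up. (5) Finally verify the overall normalization constant is $1$, using $\normfrob{\Sigma}=1$ and $\vecz(\Sigma^2)\T\vecz(\eye) = \trace(\Sigma^2) = 1$.

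Concretely, in matrix notation I would show $\Mt$ of \eqref{eq:iTR-rq-Mt}, expressed through canonical cores, satisfies $v_L\T \Mt v_R = \vecz(\eye)\T\,\Mt_{\can}\,\vecz(\Sigma^2)$ or an equivalent symmetric arrangement, where $\Mt_{\can}$ uses $Q_L$ on one side and $Q$, $\Sigma$ on the other; then recognizing that $\vecz(\eye)\T$ contracted against a pair of cores is a trace over the left auxiliary index and $\vecz(\Sigma^2)$ supplies the weight $\Sigma^2$ on the right auxiliary index reproduces the claimed diagram. Alternatively — and this is probably the cleanest route — one simply invokes that the canonical form of an iTR is a particular gauge choice, so \eqref{eq:iTRc-rq} is the diagram \eqref{eq:rq} evaluated at $V_L = \eye$, $V_R = \Sigma^2$ (the dominant eigenvectors of $T_X$ in canonical gauge, per \lemref{lem:can-tf-matrix-eigv} with $\eta=1$), after absorbing the $\Sigma$ factors that convert $X$-cores to $Q$-cores; the trace-cyclicity and normalization arguments are then identical to those in the proof of \thref{th:iTR-rq}.

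The main obstacle is purely bookkeeping: keeping track of where the $\Sigma$ and $\Sigma\inv$ factors go when one converts the plain cores $X(i)$ of \eqref{eq:rq-inf} into the orthogonal cores $Q_L(i)$, $Q_R(i)$, $Q(i)$, so that the leftover factors combine into precisely $\Sigma$ on the two outer legs of the final diagram rather than, say, $\Sigma$ on one side and $\Sigma^2$ hidden elsewhere. There is no genuine analytic difficulty — once the gauge transformation is written down explicitly and \lemref{lem:can-tf-matrix-eigv} and \corref{cor:normalized-iTR} are applied to collapse the two semi-infinite tails, the identity drops out; the proof in the paper is therefore expected to be short, likely just a sentence pointing to \thref{th:iTR-rq} and \lemref{lem:can-tf-matrix-eigv} together with the graphical simplification.
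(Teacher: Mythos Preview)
Your proposal is correct and uses essentially the same ingredients as the paper's proof: identify the dominant eigenvectors of the (canonical) transfer matrix via \lemref{lem:can-tf-matrix-eigv}, collapse the two semi-infinite tails with \lemref{lem:tf-matrix-power}, and close with the normalization $\vecz(\Sigma^2)\T\vecz(\eye)=\trace(\Sigma^2)=1$. The only cosmetic difference is that the paper does not route through \thref{th:iTR-rq} but instead writes $\bfx\T\bH_k\bfx$ directly in canonical form as $\trace\big[(\prod_{\ell<k}\TQR)\,\Mt_Q\,(\prod_{\ell>k+1}\TQR)\big]$ and applies \lemref{lem:tf-matrix-power} to the single transfer matrix $\TQR$ (so $v_L=\vecz(\Sigma^2)$, $v_R=\vecz(\eye)$), rather than splitting into $\TQL$ on the left and $\TQR$ on the right as you sketch in steps (2)--(3); this avoids the $\Sigma$/$\Sigma\inv$ bookkeeping you flag as the main obstacle.
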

%%% PROOF %%%
\begin{proof}
Since $\bfx$ is in canonical form, the iTR Rayleigh quotient~\eqref{eq:rq-inf} takes the form
\begin{equation}
\bfx\T\bH_{k}\bfx
 = \,\myvcenter{\figname{iTR1c-rq-inf}\mytikzrqinfcan{$Q$}{$Q$}{$\Sigma$}{$\Sigma$}}\,,
\label{eq:rq-inf-can}
\end{equation}
for arbitrary $k$. In matrix notation, this reads as
\[
\bfx\T\bH_{k}\bfx
 = \trace\left[ \left( \prod_{\ell=-\infty}^{k-1} \TQR \right)
                \Mt_Q
                \left( \prod_{\ell=k+2}^{+\infty} \TQR \right) \right], \qquad
\Mt_Q := \,
  \myvcenter{\figname{iTR1c-rq-MtQ}\mytikzMtQ{$Q$}{$Q$}{$\Sigma$}{$\Sigma$}}\,.
\]
Using \lemref{lem:tf-matrix-power} together with the fact that $\vecz(\Sigma^2)$ and
$\vecz(\eye)$ are the left and right dominant eigenvectors of
$\TQR$, respectively, we obtain
\[
\bfx\T\bH_{k}\bfx
 = \trace\left[ \vecz(\eye) \vecz(\Sigma^2)\T\,\Mt_Q\,
                \vecz(\eye) \vecz(\Sigma^2)\T \right]
 = \vecz(\Sigma^2)\T\,\Mt_Q\,\vecz(\eye)
 = \theta,
\]
where we consecutively used the trace cyclic property $\trace(XYZ) = \trace(YZX)$
and the normalization of the left and right eigenvectors of $\TQR$, i.e.,
$\vecz(\Sigma^2)\T \vecz(\eye) = 1$.
\end{proof}

If $\bfx$ is an iTR2, similar arguments as above show that the limit~\eqref{eq:deftheta} equals to
\begin{equation} \label{eq:itr2def}
  \theta = \frac12 \bfx\T\bH_{2k}\bfx + \frac12 \bfx\T\bH_{2k+1}\bfx,
\end{equation}
for an arbitrary integer $k$.
The even and odd terms take the following form:
\begin{align}
\bfx\T\bH_{2k}\bfx &= \,\myvcenter{\figname{iTR2-rq-inf-even}%
\mytikzrqinf{$X$}{$Y$}}\,,
\label{eq:rq2-inf-even}\\[5pt]
\bfx\T\bH_{2k+1}\bfx &= \,\hspace{8mm}\myvcenter{\figname{iTR2-rq-inf-odd}%
\mytikzrqinf{$Y$}{$X$}}\,.
\label{eq:rq2-inf-odd}
\end{align}

%%% THEOREM %%%
\begin{theorem}[iTR2 Rayleigh quotient]
\label{th:iTR2-rq}
Let $\bfx$ be a normalized nonzero iTR2 as in \defref{def:iTR2}
and its corresponding transfer matrices be $T_{XY}$ and $T_{YX}$ as in
\defref{def:tf-matrices}.
Then the Rayleigh quotient~\eqref{eq:itr2def}  can be computed as follows
\begin{equation}
\theta = \frac{1}{2}
         \,\myvcenter{\figname{iTR2-rq-even}\mytikzrq{$X$}{$Y$}{$V_L^\even$}{$V_R^\even$}}\,
      +\,\frac{1}{2}
         \,\myvcenter{\figname{iTR2-rq-odd}\mytikzrq{$Y$}{$X$}{$V_L^\odd$}{$V_R^\odd$}}\ ,
\label{eq:rq2}
\end{equation}
where $V_L^\even$ and $V_R^\even$ are the matricizations of the left and right
dominant eigenvectors of $T_{XY}$, and $V_L^\odd$ and $V_R^\odd$ are the
matricization of the left and right dominant eigenvectors of $T_{YX}$,
respectively.
\end{theorem}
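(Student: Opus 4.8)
The plan is to mirror the proof of \thref{th:iTR-rq} but now splitting the infinite Rayleigh sum \eqref{eq:rq0} into its even and odd parts, $\bfx\T\bH\bfx = \sum_k \bfx\T\bH_{2k}\bfx + \sum_k \bfx\T\bH_{2k+1}\bfx$, and arguing that, by translational invariance over 2 cores, all even terms are equal to one common value $\theta_\even$ and all odd terms are equal to one common value $\theta_\odd$. The average-sense Rayleigh quotient is then $\theta = \tfrac12(\theta_\even + \theta_\odd)$, and it remains to show $\theta_\even$ and $\theta_\odd$ are given by the two diagrams on the right-hand side of \eqref{eq:rq2}.

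First I would treat the even term. Starting from \eqref{eq:rq2-inf-even}, I would rewrite $\bfx\T\bH_{2k}\bfx$ in matrix notation analogously to \eqref{eq:iTR-rq-Mt}: to the left of the two cores carrying the $M$ interaction we have an infinite product of the 2-core transfer matrix $T_{XY}$, to the right another infinite product of $T_{XY}$, and sandwiched in the middle a finite matrix $\widetilde{M}_\even$ formed by contracting $M$ with the relevant $X$ and $Y$ cores on top and bottom (exactly the object depicted inside the first diagram of \eqref{eq:rq2}, before the $V_L^\even,V_R^\even$ caps). Here the key structural fact is the one recorded after \defref{def:tf-matrices}, namely $T_{XY} = T_X T_Y$, so that $T_X,T_Y$ nonsingular (hence $T_{XY}$ injective, as required by \thref{lem:tf-matrix-power}) and $\eta$ being the dominant eigenvalue of $T_{XY}$. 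Since $\bfx$ is normalized this dominant eigenvalue is $1$. Applying \thref{lem:tf-matrix-power} to $T_{XY}$ replaces $\lim_{k\to\infty}(T_{XY})^k = v_R^\even (v_L^\even)\T$ on both sides, and then the trace cyclic property together with the normalization $(v_L^\even)\T v_R^\even = 1$ collapses the expression to $(v_L^\even)\T \widetilde{M}_\even\, v_R^\even$, which is precisely the first diagram in \eqref{eq:rq2}. The odd term is handled identically with $T_{YX} = T_Y T_X$, the matrix $\widetilde{M}_\odd$, and the eigenvectors $v_L^\odd, v_R^\odd$; by \thref{lem:can2-tf-matrices}, $T_{XY}$ and $T_{YX}$ share the same dominant eigenvalue, so the same normalization $\eta = 1$ applies throughout.

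The only genuine subtlety — and I expect it to be the main point needing care rather than a hard obstacle — is justifying the ``average sense'' definition itself: the bare sum \eqref{eq:rq0} diverges, so one must argue that grouping the even terms and the odd terms separately, each into a constant, and then averaging, is the natural and well-defined notion. This is exactly parallel to the single-core case where all terms of \eqref{eq:rq0} were equal and the average was taken to be that common value; here translational invariance only holds after shifting by 2 sites, so there are two orbits under the shift, and the average over one even and one odd representative is the appropriate generalization. Once this is granted, the computation is entirely a repetition of the argument in \thref{th:iTR-rq} applied twice, to $T_{XY}$ and to $T_{YX}$, and the proof is complete.
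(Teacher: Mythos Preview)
Your proposal is correct and matches the paper's own approach: the paper simply states that the proof is similar to that of \thref{th:iTR-rq}, and what you outline is precisely that adaptation, splitting \eqref{eq:rq0} into the even and odd terms \eqref{eq:rq2-inf-even}--\eqref{eq:rq2-inf-odd} and applying \thref{lem:tf-matrix-power} to $T_{XY}$ and $T_{YX}$ respectively. Your remarks on the average-sense definition and on invoking \thref{lem:can2-tf-matrices} for the shared dominant eigenvalue are appropriate elaborations of details the paper leaves implicit.
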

%%% PROOF %%%
\begin{proof}
The proof proceeds analogously to the one for \thref{th:iTR-rq}.
\end{proof}
Using the iTR2 canonical form of \defref{def:iTR-can2}, the Rayleigh quotient
\eqref{eq:rq2} amounts to
\begin{equation}
\theta = \frac{1}{2}\,\myvcenter{\figname{iTR2c-rq-even}%
\mytikzrqcan{$Q$}{$U$}{$\Sigma$}{$\Omega$}}\,
      +\,\frac{1}{2}\,\myvcenter{\figname{iTR2c-rq-odd}%
\mytikzrqcan{$U$}{$Q$}{$\Omega$}{$\Sigma$}}\ ,
\label{eq:iTR2c-rq}
\end{equation}
where $Q$, $U$, $\Sigma$, and $\Omega$ are given in \defref{def:iTR-can2}.

% ================ %
% === Residual === %
% ================ %
\subsection{Residual}
\label{sec:res}

One way to measure the accuracy of an approximate eigenpair $(\theta, \bfx)$
of $\bH$ is to evaluate the residual norm.
When $\bH$ is infinite-dimensional and $\bfx$ an iTR, some care is needed to
properly define this residual.
By making use of the property that $\bfx$ can be written as the matrix-vector
product of the frame matrix and the canonical center core
\eqref{eq:bfx-frame-center-core},
we introduce a local iTR residual involving only one core of $\bfx$.

We start by projecting $\bH$ into the subspace spanned by the columns of
the frame matrix, defined in \defref{def:iTR-frame-matrix}.
Due to the translational invariance of $\bfx$, we can choose an arbitrary $k$
for the frame matrix.
Therefore, without loss of generality,
we set $k = 0$ for the remainder of this section.
We define the projection of $\bH$ onto $\bF_{\neq0}$ as follows
\begin{equation}
H_{\neq0} := \bF_{\neq0}\T \bH \bF_{\neq0}
           = \sum_{k=-\infty}^{+\infty} \bF_{\neq0}\T \bH_k \bF_{\neq0}
           =: \sum_{k=-\infty}^{+\infty} H_k.
\label{eq:Htsum}
\end{equation}
where $\bH_k$ is defined in \eqref{eq:H} and the matrices $H_k \inRR{dr^2}$
are obtained by projecting the corresponding $\bH_k$ onto $\bF_{\neq0}$.
Depending on the relative position of the matrix $M$ in each $\bH_k$, with
respect to index $i_0$ in $\bF_{\neq0}$, the matrices $H_k$
have different expressions for each $k$, i.e.,
\begin{equation}
\begin{aligned}
H_{-2-\ell} &= \,\myvcenter{\figname{Hmk}\mytikzHtmk{$l$}{$(\TQL)^\ell$}}\ , &
            &  \qquad \ell = 0,1,\ldots, \\[5pt]
H_{-1}      &= \,\myvcenter{\figname{Hm1}\mytikzHtmone{$Q$}{$\Sigma$}}\ ,\\[5pt]
H_0         &= \,\myvcenter{\figname{H0}\mytikzHtzero{$Q$}{$\Sigma$}}\ , \\[5pt]
H_{1+\ell}  &= \,\myvcenter{\figname{Hk}\mytikzHtk{$(\TQR)^\ell$}{$r$}}\ , &
            &  \qquad \ell = 0,1,\ldots,
\end{aligned}
\label{eq:Ht}
\end{equation}
with
\begin{align}
\myvcenter{\figname{iTR1c-HL-def}\mytikzHLdef{$l$}}
 &:= \,\myvcenter{\figname{iTR1c-hL}\mytikzhL{$\Sigma$}{$Q$}{$\Sigma$}{$Q$}}\,,&
\myvcenter{\figname{iTR1c-HR-def}\mytikzHRdef{$r$}}\,
 &:= \,\myvcenter{\figname{iTR1c-hR}\mytikzhR{$Q$}{$\Sigma$}{$Q$}{$\Sigma$}}\ .
\label{eq:iTRc-lr}
\end{align}
Note that, because the transfer matrices $\TQL$ and $\TQR$ have the dominant
eigenvalue 1, the infinite sums
\begin{equation}
\sum_{\ell=0}^{+\infty} (\TQL)^\ell, \qquad \mathrm{and} \qquad
\sum_{\ell=0}^{+\infty} (\TQR)^\ell
\label{eq:infsum-TQL-TQR}
\end{equation}
diverge, hence, the infinite sum in \eqref{eq:Htsum} also diverges.

On the other hand, we can show that for every $H_k$ in \eqref{eq:Ht},
\[
\vecz(Q_c)\T H_k \vecz(Q_c) = \theta,
\]
where $\theta$ is the the Rayleigh quotient \eqref{eq:iTRc-rq} and
$Q_\can$ the canonical center core \eqref{eq:bfx-frame-center-core}.
Therefore, we define the iTR residual as follows
\begin{equation}
\res := \sum_{k=-\infty}^{+\infty} \Big( H_k \vecz(Q_\can) -
                                         \theta \vecz(Q_\can) \Big)
     =: \sum_{k=-\infty}^{+\infty} \res_k.
\label{eq:iTRc-res}
\end{equation}
The following theorem shows how to construct this residual and
proves that, in contrast to the infinite sum in \eqref{eq:Htsum},
the iTR residual defined in \eqref{eq:iTRc-res} does converge.

%%% THEOREM %%%
\begin{theorem}[Residual]
\label{th:iTRc-res}
Let $\bfx$ be a normalized nonzero canonical iTR as in \defref{def:iTR-can}.
Then the residual for a given infinite-dimensional matrix $\bH$
\eqref{eq:H} is given by
\begin{equation}
\begin{aligned}
\res =\
 & \figname{iTR1c-res1}\mytikzresA{$Q_\can$}{$L$}\, +
 \,\figname{iTR1c-res2}\mytikzresB{$Q$}{$Q$}{$\Sigma$}{$\Sigma$}\  + \\[5pt]
 & \figname{iTR1c-res3}\mytikzresC{$Q$}{$Q$}{$\Sigma$}{$\Sigma$}\, +
 \,\figname{iTR1c-res4}\mytikzresD{$Q_\can$}{$R$}\,
 -\,4\theta\ \figname{vecQc}\mytikzveccore{$Q_\can$}\ ,
\end{aligned}
\label{eq:iTRc-res-th}
\end{equation}
where $\theta$ is the Rayleigh quotient \eqref{eq:iTRc-rq},
$Q_\can$ the canonical center core \eqref{eq:bfx-frame-center-core}, and
\begin{align}
\vecz(L)\T &:= \vecz(l)\T \left[\eye - \TtQL \right]\inv, &
\TtQL &:= \TQL - \vecz(\Sigma^2) \vecz(\eye)\T, \label{eq:iTRc-res-th-HL} \\
\vecz(R) &:= \left[\eye - \TtQR \right]\inv \vecz(r), &
\TtQR &:= \TQR - \vecz(\eye) \vecz(\Sigma^2)\T. \label{eq:iTRc-res-th-HR}
\end{align}
with $l$ and $r$ are defined by the diagrams given in \eqref{eq:Ht}.
\end{theorem}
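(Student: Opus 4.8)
The plan is to split the infinite sum \eqref{eq:iTRc-res} into the four groups of terms appearing in \eqref{eq:Ht}: the left tail $\sum_{\ell\ge 0}\res_{-2-\ell}$, the two "boundary" terms $\res_{-1}$ and $\res_{0}$, and the right tail $\sum_{\ell\ge 0}\res_{1+\ell}$, and to show that each tail telescopes into one of the geometric--series terms displayed in \eqref{eq:iTRc-res-th}. The starting observation is the one already noted in the text: for every $k$, $\vecz(Q_\can)^\T H_k \vecz(Q_\can) = \theta$, so each $\res_k = H_k\vecz(Q_\can) - \theta\vecz(Q_\can)$ is orthogonal to $\vecz(Q_\can)$ after contraction, and the $-4\theta\,\vecz(Q_\can)$ term simply collects the four "$-\theta\vecz(Q_\can)$" contributions that survive (one from each of the four groups).

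\textbf{Key steps.} First I would treat the right tail. Using \eqref{eq:Ht}, $H_{1+\ell}\vecz(Q_\can)$ contracts the center core against $(\TQR)^\ell$ on its left and the fixed local piece $r$ from \eqref{eq:iTRc-lr} on the far right, so $\sum_{\ell\ge 0} H_{1+\ell}\vecz(Q_\can)$ produces the divergent factor $\sum_{\ell\ge 0}(\TQR)^\ell$ acting on $\vecz(r)$. Here is where \lemref{lem:tf-matrix-power} enters: since $\TQR$ has simple dominant eigenvalue $1$ with left/right eigenvectors $\vecz(\Sigma^2)$ and $\vecz(\eye)$ (by \lemref{lem:can-tf-matrix-eigv}), the rank-one part $\vecz(\eye)\vecz(\Sigma^2)^\T$ is exactly the divergent piece, and it is precisely this piece that is cancelled by the matching $-\theta\vecz(Q_\can)$ terms, because $\vecz(\Sigma^2)^\T\vecz(r)$ reproduces $\theta$ (this is the identity $\vecz(Q_c)^\T H_k\vecz(Q_c)=\theta$ read off at the level of the tail). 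After removing the rank-one part one is left with the \emph{deflated} transfer matrix $\TtQR = \TQR - \vecz(\eye)\vecz(\Sigma^2)^\T$ of \eqref{eq:iTRc-res-th-HR}, all of whose eigenvalues have modulus $<1$, so $\sum_{\ell\ge 0}(\TtQR)^\ell = (\eye - \TtQR)^{-1}$ converges and yields $\vecz(R)$; reattaching the frame/center-core contraction gives the fourth diagram in \eqref{eq:iTRc-res-th}. The left tail is handled symmetrically with $\TQL$, $\vecz(\Sigma^2)$ as its left eigenvector, and the deflation in \eqref{eq:iTRc-res-th-HL}, producing the first diagram via $\vecz(L)^\T = \vecz(l)^\T(\eye-\TtQL)^{-1}$. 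Finally the two boundary terms $\res_{-1}$ and $\res_{0}$ are already finite — their $H_k$ are the explicit finite diagrams in \eqref{eq:Ht} with no infinite product — so they contribute the middle two diagrams of \eqref{eq:iTRc-res-th} directly, each carrying one $-\theta\vecz(Q_\can)$. Summing the four groups gives the $-4\theta\,\vecz(Q_\can)$ term and completes the identity.

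\textbf{Main obstacle.} The delicate point is the regularization of the two divergent geometric sums $\sum_{\ell\ge0}(\TQL)^\ell$ and $\sum_{\ell\ge0}(\TQR)^\ell$: one must argue that the divergent rank-one contribution is \emph{exactly} annihilated by the corresponding $\theta\vecz(Q_\can)$ terms, so that the partial sums $\sum_{|k|\le N}\res_k$ have a genuine limit rather than merely a formal one. Concretely this requires checking that $\lim_{N\to\infty}\big[(\eye+\TQR+\dots+\TQR^N) - (N+1)\vecz(\eye)\vecz(\Sigma^2)^\T\big] = (\eye-\TtQR)^{-1}$, which follows from writing $\TQR = \vecz(\eye)\vecz(\Sigma^2)^\T + \TtQR$, using that the cross terms vanish because $\vecz(\Sigma^2)^\T\vecz(\eye)=1$ makes the two parts idempotent-orthogonal in the right way, and invoking $\mathrm{spr}(\TtQR)<1$. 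Everything else is bookkeeping of tensor-diagram contractions against the frame matrix, using its orthonormality $\bF_{\neq0}^\T\bF_{\neq0} = \eye[dr^2]$ and the center-core identity \eqref{eq:bfx-frame-center-core}; I would also double-check the relation \eqref{eq:rel-QLR} to keep the $\Sigma$-placements consistent between the $l,r$ definitions in \eqref{eq:iTRc-lr} and the canonical transfer matrices.
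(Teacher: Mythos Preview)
Your approach is essentially the same as the paper's: split the sum into the two tails and the two boundary terms, absorb each $-\theta\,\vecz(Q_\can)$ in the tail into the deflation $\TQR\mapsto\TtQR$ (respectively $\TQL\mapsto\TtQL$), and sum the resulting convergent geometric series. One small slip: your regularization identity should read $\lim_{N\to\infty}\bigl[\sum_{\ell=0}^{N}(\TQR)^\ell - (N{+}1)P\bigr] = (\eye-\TtQR)^{-1} - P$ with $P=\vecz(\eye)\vecz(\Sigma^2)^\T$ (equivalently, subtract only $N$ copies of $P$ to land exactly on $(\eye-\TtQR)^{-1}$), since $(\TQR)^0-P\neq(\TtQR)^0$; this extra $-P$ is precisely the single surviving $-\theta\,\vecz(Q_\can)$ from each tail that you correctly account for in your bookkeeping.
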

%%% PROOF %%%
\begin{proof}
Since each term in the infinite sum of \eqref{eq:iTRc-res} is defined
as the difference $\res_k = H_k \vecz(Q_\can) - \theta \vecz(Q_\can)$,
we immediately obtain from
\eqref{eq:Ht} that the 2nd and 3rd term in \eqref{eq:iTRc-res-th} are
equal to $H_{-1} \vecz(Q_\can)$ and $H_0 \vecz(Q_\can)$, respectively.
Next, we will prove that $\sum_{k=-\infty}^{-2} \res_k$ yields the 1st term
in \eqref{eq:iTRc-res-th} minus $\theta \vecz(Q_\can)$, and
$\sum_{k=1}^{+\infty} \res_k$ yields the 4th term minus $\theta \vecz(Q_\can)$.
We show that both infinite sums converge.

By using \eqref{eq:iTRc-rq} and \eqref{eq:Ht}, we diagrammatically
rewrite $\theta \vecz(Q_c)$ in the following factored form
\[
\theta\ \figname{vecQc}\mytikzveccore{$Q_\can$}\,
\fignames{iTR1c-res-proof-theta-Qc-}%
 = \,\mytikzresproofthetaQc{$Q$}{$Q$}{$\Sigma$}{$\Sigma$}{$Q_\can$}\,
 = \,\mytikzHLdefsmall{$l$}\!\!\!
   \underbrace{\mytikznotvecSvecIT{$\Sigma$}}_{\vecz(\Sigma^2) \vecz(\eye)\T}
   \!\!\!\mytikzresproofQcl{$Q_\can$}\ ,
\]
yielding
\begin{equation}
\res_{-2-\ell} = \,\figname{iTR1c-res-proof-Rmk}%
                 \mytikzresproofRl{$l$}{$(\TQL)^\ell$}{$Q_\can$}\,-\,
                 \theta\ \figname{vecQc}\mytikzveccore{$Q_\can$}\,
               = \,\figname{iTR1c-res-proof-Rtmk}%
                 \mytikzresproofRl{$l$}{$(\TtQL)^\ell$}{$Q_\can$}\ ,
\label{eq:iTRc-res-Rml}
\end{equation}
where $(\TtQL)^\ell = (\TQL)^\ell - \vecz(\Sigma^2) \vecz(\eye)\T$
for $\ell = 1,2,\ldots$.
Because $\vecz(\eye)$ and $\vecz(\Sigma^2)$ are the left and right
eigenvectors associated with the dominant eigenvalue 1 of $\TQL$,
as shown in \lemref{lem:can-tf-matrix-eigv}, the largest eigenvalue
of the deflated matrix $\TtQL$ is less than 1 (in magnitude).
Hence, in contrast to \eqref{eq:infsum-TQL-TQR},
the geometric series of $\TtQL$ converges, yielding
\[
\sum_{\ell=0}^{+\infty} (\TtQL)^\ell = \left[\eye - \TtQL\right]\inv.
\]
Note that this sum starts from 0 and \eqref{eq:iTRc-res-Rml} starts from 1.
Consequently, the infinite sum $\sum_{k=-\infty}^{-2} \res_k$ yields the
1st term in \eqref{eq:iTRc-res-th} minus $\theta \vecz(Q_\can)$,
where $L$ is defined by \eqref{eq:iTRc-res-th-HL}.

For the remaining part of the proof, we rewrite $\vecz(Q_\can) \theta$ as
\[
\figname{vecQc}\mytikzveccore{$Q_\can$}\ \theta\,
\fignames{iTR1c-res-proof-Qc-theta-}%
 = \,\mytikzresproofQctheta{$Q$}{$Q$}{$\Sigma$}{$\Sigma$}{$Q_\can$}\,
 = \,\mytikzresproofQcr{$Q_\can$}\!\!\!
   \underbrace{\mytikznotvecIvecST{$\Sigma$}}_{\vecz(\eye) \vecz(\Sigma^2)\T}
   \!\!\!\mytikzHRdefsmall{$r$}\ ,
\]
yielding
\begin{equation}
\res_{1+\ell} = \,\figname{iTR1c-res-proof-Rk}%
                \mytikzresproofRr{$Q_\can$}{$(\TQR)^\ell$}{$r$}\,-\,
                \figname{vecQc}\mytikzveccore{$Q_\can$}\ \theta\,
              = \,\figname{iTR1c-res-proof-Rtk}%
                \mytikzresproofRr{$Q_\can$}{$(\TtQR)^\ell$}{$r$}\ ,
\label{eq:iTRc-res-Rl}
\end{equation}
for $\ell = 1,2,\ldots$.
Using similar arguments as before, we can show that the geometric series of
$\TtQR$ converges and that the infinite sum $\sum_{k=1}^{+\infty} \res_k$ yields
the 4th term in \eqref{eq:iTRc-res-th} minus $\theta \vecz(Q_\can)$.
Remark that the term $\theta \vecz(Q_\can)$ in $\res_k$ always gets incorporated
in $\TtQL$ or $\TtQR$, except for $k = -2,-1,0,1$, yielding the 5th term in
\eqref{eq:iTRc-res-th}.
This completes the proof.
\end{proof}

In order to define the residual for a Ritz pair $(\theta,\bfx)$ with $\bfx$
being an iTR2, we first introduce the frame matrices $\bF_{\neq Q}$ and
$\bF_{\neq U}$ which allow us to rewrite $\bfx$ as follows
\begin{equation}
\bfx = \bF_{\neq Q} \vecz(Q_\can) = \bF_{\neq U} \vecz(U_\can),
\label{eq:bfx-frame2}
\end{equation}
where $\bF_{\neq Q}\T \bF_{\neq Q} = \bF_{\neq U}\T \bF_{\neq U} = \eye$ and
the \emph{canonical center cores} $Q_\can$ and $U_\can$ are
\fignames{iTR2c-center-core-}%
\begin{align}
\mytikzcore{$Q_\can$} &= \mytikzcentercore{$\Omega$}{$Q$}{$\Sigma$}\,,&
\mytikzcore{$U_\can$} &= \mytikzcentercore{$\Sigma$}{$U$}{$\Omega$}\,.
\label{eq:iTRc-center-cores}
\end{align}
respectively.
Next, we define the residual in an average sense as follows
\begin{align*}
\res =
 &\ \frac{1}{2} \sum_{k=-\infty}^{+\infty} \Big(
    \bF_{\neq Q}\T \bH_k \bF_{\neq Q} \vecz(Q_\can) - \theta \vecz(Q_\can)
    \Big)\ + \\
 &\ \frac{1}{2} \sum_{k=-\infty}^{+\infty} \Big(
    \bF_{\neq U}\T \bH_k \bF_{\neq U} \vecz(U_\can) - \theta \vecz(U_\can)
    \Big),
\end{align*}
where $\theta$ is the Rayleigh quotient \eqref{eq:iTR2c-rq}, and $Q_\can$
and $U_\can$ the canonical center cores \eqref{eq:iTRc-center-cores}.
The following theorem summarizes how to compute this residual.

%%% THEOREM %%%
\begin{theorem}[iTR2 residual]
\label{th:iTR2c-res}
Let $\bfx$ be a normalized nonzero canonical iTR2 as in \defref{def:iTR-can2}.
Then the residual for a given infinite-dimensional matrix $\bH$
\eqref{eq:H} is given by
\begin{equation}
\begin{aligned}
\res =\
 &\frac{1}{2}\,
  \figname{iTR2c-resQ1}\mytikzresA{$Q_\can$}{$L_Q$}\, +
  \frac{1}{2}\,
  \figname{iTR2c-resQ2}\mytikzresB{$U$}{$Q$}{$\Omega$}{$\Sigma$}\ + \\
 &\frac{1}{2}\,
  \figname{iTR2c-resQ3}\mytikzresC{$Q$}{$U$}{$\Sigma$}{$\Omega$}\, +
  \frac{1}{2}\,
  \figname{iTR2c-resQ4}\mytikzresD{$Q_\can$}{$R_Q$}\,
    -\,3\theta\ \figname{vecQc}\mytikzveccore{$Q_\can$}\ + \\
 &\frac{1}{2}\,
  \figname{iTR2c-resU1}\mytikzresA{$U_\can$}{$L_U$}\, +
  \frac{1}{2}\,
  \figname{iTR2c-resU2}\mytikzresB{$Q$}{$U$}{$\Sigma$}{$\Omega$}\ + \\
 &\frac{1}{2}\,
  \figname{iTR2c-resU3}\mytikzresC{$U$}{$Q$}{$\Omega$}{$\Sigma$}\, +
  \frac{1}{2}\,
  \figname{iTR2c-resU4}\mytikzresD{$U_\can$}{$R_U$}\,
    -\,3\theta\ \figname{vecUc}\mytikzveccore{$U_\can$}\ ,
\end{aligned}
\label{eq:iTR2c-res-th}
\end{equation}
where $\theta$ is the Rayleigh quotient \eqref{eq:iTR2c-rq},
$Q_\can$ and $U_\can$ the canonical center cores \eqref{eq:iTRc-center-cores},
\begin{equation}
\begin{aligned}
\vecz(L_Q)\T &:= \left[ \vecz(l_Q)\T + \vecz(l_U)\T\TUL \right]
                 \left[ \eye - \TtQUL                   \right]\inv, \\
\vecz(L_U)\T &:= \left[ \vecz(l_U)\T + \vecz(l_Q)\T\TQL \right]
                 \left[ \eye - \TtUQL                   \right]\inv, \\
\vecz(R_Q) &:= \left[ \eye - \TtUQR               \right]\inv
               \left[ \vecz(r_U) + \TUR\vecz(r_Q) \right], \\
\vecz(R_U) &:= \left[ \eye - \TtQUR               \right]\inv
               \left[ \vecz(r_Q) + \TQR\vecz(r_U) \right],
\end{aligned}
\label{eq:iTR2c-res-linsolve}
\end{equation}
with
\begin{align*}
\TtQUL &:= \TQUL - \vecz(\Omega^2) \vecz(\eye)\T, &
\TtQUR &:= \TQUR - \vecz(\eye) \vecz(\Omega^2)\T, \\
\TtUQL &:= \TUQL - \vecz(\Sigma^2) \vecz(\eye)\T, &
\TtUQR &:= \TUQR - \vecz(\eye) \vecz(\Sigma^2)\T,
\end{align*}
and
\begingroup
\allowdisplaybreaks
\begin{align*}
\myvcenter{\figname{iTR2c-HLQ-def}\mytikzHLdef{$l_Q$}}   &:= \,
\myvcenter{\figname{iTR2c-hLQ}\mytikzhL{$\Omega$}{$Q$}{$\Sigma$}{$U$}}\,, &
\myvcenter{\figname{iTR2c-HRQ-def}\mytikzHRdef{$r_Q$}}\, &:= \,
\myvcenter{\figname{iTR2c-hRQ}\mytikzhR{$Q$}{$\Sigma$}{$U$}{$\Omega$}}\ ,\\[5pt]
\myvcenter{\figname{iTR2c-HLU-def}\mytikzHLdef{$l_U$}}   &:= \,
\myvcenter{\figname{iTR2c-hLU}\mytikzhL{$\Sigma$}{$U$}{$\Omega$}{$Q$}}\,, &
\myvcenter{\figname{iTR2c-HRU-def}\mytikzHRdef{$r_U$}}\, &:= \,
\myvcenter{\figname{iTR2c-hRU}\mytikzhR{$U$}{$\Omega$}{$Q$}{$\Sigma$}}\ .
\end{align*}
\endgroup
\end{theorem}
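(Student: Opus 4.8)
The plan is to run the argument of \thref{th:iTRc-res} twice---once with the frame matrix $\bF_{\neq Q}$ and once with $\bF_{\neq U}$---and then combine the results with weights $\nicefrac{1}{2}$. Writing $\res = \nicefrac{1}{2}\,\res^Q + \nicefrac{1}{2}\,\res^U$ with
\[
\res^Q := \sum_{k=-\infty}^{+\infty}\!\Big(\bF_{\neq Q}\T\bH_k\bF_{\neq Q}\vecz(Q_\can) - \theta\vecz(Q_\can)\Big)
\]
and $\res^U$ defined analogously from $\bF_{\neq U}$ and $U_\can$, it suffices to extract the first block of \eqref{eq:iTR2c-res-th} from $\res^Q$: since \defref{def:iTR-can2} and \lemref{lem:can2-tf-matrix-eigv} are symmetric under the exchange $(Q,\Sigma)\leftrightarrow(U,\Omega)$, the second block follows by this exchange, and the $\nicefrac{1}{2}$ weights reproduce \eqref{eq:iTR2c-res-th}.

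For $\res^Q$ I would classify the projected local operators $H_k^Q := \bF_{\neq Q}\T\bH_k\bF_{\neq Q}$ by the position of the nearest-neighbour block $M$ relative to the distinguished $Q$-core. Exactly two of them, say $H_{-1}^Q$ and $H_0^Q$, have $M$ acting on a core touching the center (the dressed $U$-core on its left, resp.\ on its right); collapsing the semi-infinite tails of $\bF_{\neq Q}$ by \lemref{lem:tf-matrix-power} and simplifying with the canonical relations \eqref{eq:rel-QLR}---exactly as in the derivation of $\bF_{\neq k}\T\bF_{\neq k}=\eye[dr^2]$---turns these two contributions into the second and third diagrams of the $Q$-block of \eqref{eq:iTR2c-res-th}. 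For every other $k$ the block $M$ lies entirely to the left, or entirely to the right, of the center; collapsing the outer tail by \lemref{lem:tf-matrix-power} replaces the infinite repetition of the relevant canonical transfer matrix with its rank-one dominant eigen-projection built from $\vecz(\eye)$ and $\vecz(\Sigma^2)$ or $\vecz(\Omega^2)$, leaving one of the boundary tensors $l_Q$, $l_U$ (on the left) or $r_Q$, $r_U$ (on the right) composed with a finite string of canonical transfer matrices and contracted with $Q_\can$.

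Summing these tails is the core step, and it is where the two-core period enters. Shifting $M$ one core further out alternates between the patterns ``$Q$ then $U$'' and ``$U$ then $Q$'', so re-indexing the left tail in steps of two cores gives $\big(\vecz(l_Q)\T+\vecz(l_U)\T\TUL\big)\sum_{n\ge0}(\TQUL)^n$ contracted with $Q_\can$, and the right tail gives $\sum_{n\ge0}(\TUQR)^n\big(\vecz(r_U)+\TUR\vecz(r_Q)\big)$, using $\TQUL=\TQL\TUL$ and $\TUQR=\TUR\TQR$. These bare geometric series diverge, because by \lemref{lem:can2-tf-matrix-eigv} both $\TQUL$ and $\TUQR$ have dominant eigenvalue $1$; but, as in \thref{th:iTRc-res}, the $\theta\vecz(Q_\can)$ subtracted from each term factors through the corresponding dominant eigen-projection and deflates $\TQUL$ to $\TtQUL=\TQUL-\vecz(\Omega^2)\vecz(\eye)\T$ on the left and $\TUQR$ to $\TtUQR=\TUQR-\vecz(\eye)\vecz(\Sigma^2)\T$ on the right. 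The deflated matrices have spectral radius strictly below $1$, so $\sum_{n\ge0}(\TtQUL)^n=[\eye-\TtQUL]\inv$ and $\sum_{n\ge0}(\TtUQR)^n=[\eye-\TtUQR]\inv$ converge and yield precisely the factors $L_Q$ and $R_Q$ of \eqref{eq:iTR2c-res-linsolve}, hence the first and fourth diagrams of the $Q$-block. It then remains to account for the $\theta\vecz(Q_\can)$ terms that escape the deflation. Because an iTR2 carries even and odd local terms, the partial sums of $\res^Q$ have to be taken over pairs---one even $H_k$ together with one odd $H_{k+1}$---which is legitimate precisely because $\theta$ is the average of the even and odd local Rayleigh values (\thref{th:iTR2-rq}); a leftover $\theta\vecz(Q_\can)$ is then generated at each boundary slot, namely the two bare terms at the center and the $n=0$ start of each tail, and since each tail now has two starts (one of each parity) there are six such slots, adding to $-6\theta\vecz(Q_\can)$, i.e.\ $-3\theta\vecz(Q_\can)$ after the $\nicefrac{1}{2}$ weight. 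Adding the $U$-block, obtained verbatim under $(Q,\Sigma)\leftrightarrow(U,\Omega)$, then gives \eqref{eq:iTR2c-res-th}.

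I expect the $\theta$-bookkeeping to be the main obstacle: one has to identify the finitely many $H_k^Q$ that leave an unabsorbed multiple of $\theta\vecz(Q_\can)$ once the deflated geometric series are assembled, handle the even/odd pairing needed for $\res^Q$ to converge in the first place, and confirm that the leftover is $-3\theta$ per block rather than the $-4\theta$ one might transcribe too hastily from \thref{th:iTRc-res}. A secondary point needing care is checking that the cross terms $\vecz(l_U)\T\TUL$ and $\TUR\vecz(r_Q)$ are exactly what the two-core re-indexing produces; this is where $\TQUL=\TQL\TUL$, $\TUQR=\TUR\TQR$, and the fixed-point relations \eqref{eq:TQU-eigv}--\eqref{eq:TUQ-eigv} of \lemref{lem:can2-tf-matrix-eigv} come in.
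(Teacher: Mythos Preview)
Your proposal is correct and follows precisely the approach the paper intends: its own proof reads in full ``The proof is similar to the one for \thref{th:iTRc-res},'' and you have spelled out exactly that adaptation, correctly identifying the two-core re-indexing that produces the cross terms in \eqref{eq:iTR2c-res-linsolve}, the deflation via \lemref{lem:can2-tf-matrix-eigv}, and the fact that the $\theta$-bookkeeping yields $-3\theta$ per block rather than the $-4\theta$ of the single-core case.
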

%%% PROOF %%%
\begin{proof}
The proof is similar to the one for \thref{th:iTRc-res}.
\end{proof}

%%%%%%%%%%%%%%%%%%%%%%
%%%  POWER METHOD  %%%
%%%%%%%%%%%%%%%%%%%%%%
\section{Power method}
\label{sec:fpm}

One way to compute the algebraically smallest eigenvalue $\lambda_1$
of a symmetric positive definite matrix $\bH$ is to apply the power method to the matrix exponential $e^{-\bH}$.
If $\lambda_1$ is simple, the power method converges linearly to the desired
eigenpair $(\lambda_1, \bfx)$ at the rate $e^{\lambda_2 - \lambda_1}$.
This approach is generally not recommended because working with the matrix
exponential $e^{-\bH}$ can be costly.
In fact, computing $e^{-\bH}$ or applying $e^{-\bH}$ to a vector may be harder
than computing selected eigenvalues of $\bH$.
However, when $\bH$ has the structure exhibited in \eqref{eq:H},
$e^{-\bH t}$ may be approximated by a simpler form that makes it
possible to (approximately) perform a power iteration for $e^{-\bH t}$ if $t$ is sufficiently small.

Throughout this section, we will make use of iTR matrices, which are graphically
represented as follows
\[
\bA = \,\vcenter{\hbox{\figname{iTRmatrix}\mytikziTRmatrix{$A$}{-1}{0}{1}}}\ ,
\]
where all indices $i_k,j_k$ run from 1 to $d$ and each $A(j,i)$ is a matrix of
size $r \times r$, with $r$ referred to as the \emph{rank} of $\bA$.

% ===================================================== %
% === Approximating $\exp(-Ht)$ by matrix splitting === %
% ===================================================== %
\subsection{Approximating $\exp(-\bH t)$ by matrix splitting}

We now discuss how to approximate the operation $e^{-\bH t} \bfx$, with
$\bfx$ being an iTR, which is required in a single step of the power iteration. Because the terms 
$\bH_k$ in the sum $\bH = \sum_k \bH_k$ do not commute,  the expressions
$e^{-\bH t}$ and 
$\cdots e^{-\bH_{k-1}} e^{-\bH_k} e^{-\bH_{k+1}} \cdots$ are not the same but they can be expected to be close for small $t$.
Indeed, the Lie product formula, 
also known as Suzuki--Trotter splitting, states that
\[
e^{(A + B)t} = e^{At} e^{Bt}  + \bigO(t^2),
\]
holds for square matrices $A$ and $B$. This suggest the following approximation:
\begin{equation}
e^{-\bH t} \approx \prod_{k=-\infty}^{+\infty} e^{-\bH_k t}, \qquad t \approx 0.
\label{eq:stsplit}
\end{equation}
In the physics literature, $t$ is viewed as an imaginary time variable and,
hence, $e^{-\bH t}$ is often referred to as imaginary time evolution.

Let us first assume that the matrices $\bH_k$ only act on one site, that is,
\[
\bH_k = \cdots \otimes \eye[d] \otimes \eye[d] \otimes A_k
               \otimes \eye[d] \otimes \eye[d] \otimes \cdots,
\qquad A_k = A \inRR{d}.
\]
By making use of the identity
$
e^{\eye \otimes A \otimes \eye} = \eye \otimes e^{A} \otimes \eye$,
the matrix exponentials $e^{-\bH_k t}$ can be simplified as follows
\[
e^{-\bH_k t} = \cdots \otimes \eye \otimes \eye \otimes e^{-A t}
                      \otimes \eye \otimes \eye \otimes \cdots.
\]
In turn, the infinite product in \eqref{eq:stsplit} is the rank-1 iTR matrix
\begin{align*}
\prod_{k=-\infty}^{+\infty} e^{-\bH_k t}
 &= \cdots \otimes e^{-A t} \otimes e^{-A t} \otimes e^{-A t} \otimes \cdots
  = \cdots \figname{iTRmatrix-exp-At}\mytikzexpAt{$e^{-At}$}{-1}{0}{1} \cdots,
\end{align*}
where the dotted lines correspond to rank $r = 1$.
Hence, if $\bfx$ is an iTR, the product
\begin{equation}
\bfy = \left( \prod_{k=-\infty}^{+\infty} e^{-\bH_k t} \right) \bfx,
\label{eq:splitk}
\end{equation}
is again an iTR of the same rank, and \eqref{eq:splitk} can be efficiently 
computed by left multiplying the slices of $\bfx$ with the $d \times d$
matrix $e^{-A t}$
\[
\fignames{iTRcore-YexpAtX-}%
\mytikzcore{$Y$} = \mytikzYexpAtX{$X$}{$e^{-At}$}\,,
\]
where $X$ and $Y$ are the cores of $\bfx$ and $\bfy$, respectively.

Let us now consider the case that $\bH_k$ acts on two neighbors,
that is,
\begin{equation}
\bH_k = \cdots \otimes \eye[d] \otimes \eye[d] \otimes M_{k,k+1}
               \otimes \eye[d] \otimes \eye[d] \otimes \cdots,
\qquad M_{k,k+1} = M \inRR{d^2}.
\label{eq:Hk-neighbor}
\end{equation}
Then $\bH_k$ will interact with the $k$th and $(k+1)$st cores of $\bfx$.
Because $\bH_k$ and $\bH_{k+1}$ both interact with the $(k+1)$st core of $\bfx$,
$e^{-\bH t} \bfx$ cannot be directly obtained as in \eqref{eq:splitk}.
Therefore, we first reorder the summation and group the
even and odd terms together:
\[
\bH = \left( \sum_{k=-\infty}^{+\infty} \bH_{2k} \right) +
      \left( \sum_{k=-\infty}^{+\infty} \bH_{2k+1} \right)
    =: \ \bH_\even + \bH_\odd.
\]
Applying the Suzuki--Trotter splitting to both terms, yields
\begin{equation}
e^{-\bH t} = e^{-(\bH_\even + \bH_\odd)t}
 \approx \left( \prod_{k=-\infty}^{+\infty} e^{-\bH_{2k} t} \right)
         \left( \prod_{k=-\infty}^{+\infty} e^{-\bH_{2k+1} t} \right),
\label{eq:H-eo-split}
\end{equation}
for sufficiently small $t$, where
\begin{align}
\prod_{k=-\infty}^{+\infty} e^{-\bH_{2k} t}
 &= \figname{iTRmatrix-exp-Mt-even}%
\mytikzexpMt{$e^{-Mt}$}{2k-2}{2k-1}{2k}{2k+1}{2k+2}{2k+3}\,,
\label{eq:prod-Hk-even} \\[5pt]
\prod_{k=-\infty}^{+\infty} e^{-\bH_{2k+1} t}
 &= \hspace{10mm} \figname{iTRmatrix-exp-Mt-odd}%
\mytikzexpMt{$e^{-Mt}$}{2k-1}{2k}{2k+1}{2k+2}{2k+3}{2k+4}\,.
\label{eq:prod-Hk-odd}
\end{align}
Remark that both \eqref{eq:prod-Hk-even} and \eqref{eq:prod-Hk-odd} can also be considered
as local operators when we combine the corresponding neighboring indices.
In particular, each of the even and odd infinite products in \eqref{eq:H-eo-split}
result in rank-1 iTR matrices
\begin{align}
\prod_{k=-\infty}^{+\infty} e^{-\bH_{2k} t}
 &= \figname{iTRmatrix-exp-Mt-even-comb}%
\mytikzexpMtcomb{$e^{-Mt}$}{2k-2,2k-1}{2k,2k+1}{2k+2,2k+3}\,,
\label{eq:prod-Hk-even-combined} \\[5pt]
\prod_{k=-\infty}^{+\infty} e^{-\bH_{2k+1} t}
 &= \hspace{10mm} \figname{iTRmatrix-exp-Mt-odd-comb}%
\mytikzexpMtcomb{$e^{-Mt}$}{2k-1,2k}{2k+1,2k+2}{2k+3,2k+4}\,,
\label{eq:prod-Hk-odd-combined}
\end{align}
where the neighboring indices
$i_{2k,2k+1} = (i_{2k},i_{2k+1})$ and
$i_{2k+1,2k+2} = (i_{2k+1},i_{2k+2})$ are combined, respectively.

In order to approximate $e^{-\bH t} \bfx$,
we first focus on the odd terms, i.e.,
\begin{equation}
\bfy_\odd = \left( \prod_{k=-\infty}^{+\infty} e^{-\bH_{2k+1} t} \right) \bfx,
\label{eq:split-yo}
\end{equation}
which can be computed in the same way as \eqref{eq:splitk} by reformulating
$\bfx$ as an iTR2 with 2 cores $X$, i.e., combining the $(2k+1)$st and
$(2k+2)$nd cores into a \emph{supercore} $\cX$ of dimension
$r \times d^2 \times r$.
Hence, \eqref{eq:split-yo} only requires the left multiplication of the slices
of the new supercore with the $d^2 \times d^2$ matrix $e^{-Mt}$
\[
\fignames{iTRsupercore-expMt-supercore-odd-}%
\mytikzsupercoredim{$\cY_\odd$}{$d^2$} = \mytikzYexpMtX{$\cX$}{$e^{-Mt}$}\,,
\qquad\quad \mbox{with} \qquad\quad
\mytikzsupercoredim{$\cX$}{$d^2$}
 = \mytikzsupercoresplitteddim{$X$}{$X$}{$d$}{$d$}\,,
\]
where $d$ and $d^2$ next to the crossed out legs correspond to the size of the (super)cores.
Thereafter, we only need to multiply the even terms. However, directly applying
\begin{equation}
\bfy
 = \left( \prod_{k=-\infty}^{+\infty} e^{-\bH_{2k} t} \right) \bfy_\odd
\label{eq:split2kp1}
\end{equation}
is not possible because the supercores $\cY_\odd$ are formed by combining the
indices $(2k+1,2k+2)$, while \eqref{eq:prod-Hk-even-combined} requires the
indices $(2k,2k+1)$ to be combined.
Therefore, we first need to split up the supercore of $\bfy_\odd$ back into 2 normal cores,
followed by a new and different recombination of the cores in order to carry
out \eqref{eq:split2kp1} efficiently.

Note that the size of $\cY_\odd$ is $r \times d^2 \times r$ and in order to split it up
into 2 cores of size $r \times d \times r$, we necessarily need to perform an approximation.
A possible way to achieve this is by first reshaping $\cY_\odd$ into an
$rd \times rd$ matrix $C$, next approximating $C$ by a rank-$r$ factorization
\[
C \approx C_1 C_2\T,
\]
where $C_1, C_2 \inR[dr][r]$,
and finally reshaping the factors $C_1$ and $C_2$ into 3rd-order tensors
\[
\fignames{iTR2-split-odd-}%
\mytikziTRsplitA{$\cY_\odd$}{$r$}{$d^2$} =
\mytikziTRsplitB{$C$}{$dr$}{$rd$} \approx
\mytikziTRsplitC{$C_1$}{$C_2\T$}{$dr$}{$r$}{$rd$} =
\mytikziTRsplitD{$Y_{1\odd}$}{$Y_{2\odd}$}{$r$}{$d$}\ .
\]
In order words, $\bfy_\odd$ is approximated by
\[
\bfyt_\odd
 = \,\figname{iTR2-bfyt-odd-approx}\mytikziTRRapprox{$Y_{1\odd}$}{$Y_{2\odd}$}%
{$i_{2k-1}$}{$i_{2k}$}{$i_{2k+1}$}{$i_{2k+2}$}\ .
\]
Consequently, we can now approximate \eqref{eq:split2kp1}
by replacing $\bfy_\odd$ with $\bfyt_\odd$
\[
\fignames{iTRsupercore-expMt-supercore-}%
\mytikzsupercoredim{$\cY$}{$d^2$}
 = \mytikzYexpMtX{$\widetilde\cY_\odd$}{$e^{-Mt}$}\,,
\qquad\quad \mbox{with} \qquad\quad
\mytikzsupercoredim{$\widetilde\cY_\odd$}{$d^2$}
 = \mytikzsupercoresplitteddim{$Y_{2\odd}$}{$Y_{1\odd}$}{$d$}{$d$}\,.
\]
In the final step, we split the supercore $\cY$ into 2 cores $Y_2$ and $Y_1$
\[
\fignames{iTR2-split-}%
\mytikziTRsplitA{$\cY$}{$r$}{$d^2$} =
\mytikziTRsplitB{$D$}{$dr$}{$rd$} \approx
\mytikziTRsplitC{$D_1$}{$D_2\T$}{$dr$}{$r$}{$rd$} =
\mytikziTRsplitD{$Y_2$}{$Y_1$}{$r$}{$d$}\ .
\]
or in order words, $\bfy$ is approximated by
\[
\bfyt
 = \,\figname{iTR2-bfyt-approx}\mytikziTRRapprox{$Y_2$}{$Y_1$}%
{$i_{2k}$}{$i_{2k+1}$}{$i_{2k+2}$}{$i_{2k+3}$}\ .
\]
Remark that, although we started with an iTR $\bfx$, the result of applying
$e^{-\bH t}$ to it is an iTR2.
Therefore, in \figref{fig:matvec} we graphically illustrate the application of
\eqref{eq:stsplit} to an iTR2 with cores $X_1$ and $X_2$.
Due to the translational invariance, we only need to apply $e^{-Mt}$ twice in
order to compute $\bfyt$, which is indicated in black.
The computation indicated in gray can completely be ignored, so that the
multiplication $\bfy = e^{-\bH t}\bfx$ can be computed in an efficient way.

%%% FIGURE %%%
\begin{figure}[hbtp]
\begin{align*}
\left( \prod_{k=-\infty}^{+\infty} e^{-\bH_{2k+1} t} \right) \bfx \,
 &= \,\figname{iTR2-matvecA}%
\mytikzmatvecA{$X_1$}{$X_2$}{$e^{-Mt}$} \\[5pt]
\left( \prod_{k=-\infty}^{+\infty} e^{-\bH_{2k} t} \right) \bfyt_\odd \,
 &= \,\figname{iTR2-matvecB}%
\mytikzmatvecB{$Y_{1\odd}$}{$Y_{2\odd}$}{$e^{-Mt}$} \\[5pt]
\bfyt \,
 &= \,\figname{iTR2-matvecC}\mytikzmatvecC{$Y_1$}{$Y_2$}
\end{align*}
\vspace{-10pt}
\caption{The matrix--vector operation $\bfy = e^{-\bH t}\bfx$.}
\label{fig:matvec}
\end{figure}

% ================= %
% === Algorithm === %
% ================= %
\subsection{Algorithm}
\label{sec:alg}

As illustrated in \figref{fig:matvec}, we only need to operate on 1 supercore
at a time.
Instead of working with supercores involving $X_1,X_2$ and
$Y_{2\odd},Y_{1\odd}$, respectively,
we will operate on the following canonical center supercores $\cX_{1\can}$
and $\cX_{2\can}$:
\[
\fignames{iTR2c-center-coreQU-}%
\mytikzsupercore{$\cX_{1\can}$}
 = \mytikzcentersupercore{$Q$}{$U$}{$\Sigma$}{$\Omega$}\,,
\quad \mathrm{and} \quad
\fignames{iTR2c-center-coreUQ-}%
\mytikzsupercore{$\cX_{2\can}$}
 = \mytikzcentersupercore{$U$}{$Q$}{$\Omega$}{$\Sigma$}\,,
\]
respectively, so that their corresponding frame matrices are orthogonal.

We only describe the application of the odd part of $e^{-\bH t}$ onto
$\cX_\can$, since the even part can be performed in exactly the same way
by interchanging $Q,U$ and $\Sigma,\Omega$, respectively.
The computation consists of 5 steps as illustrated in \Figref{fig:matvec-step}.
In step~1, the canonical center supercore $\cX_{1\can}$ is formed.
Next, we apply $e^{-Mt}$ to it in step~2, followed by reshaping the resulting
supercore $\cY_{1\can}$ into a matrix.
In step~3, we approximate the matrix representation of $\cY_{1\can}$ by a
truncated SVD, with the diagonal matrix $S_r$ containing only the $r$ largest
singular values, followed by reshaping the left and right singular vectors
$W$ and $V$ into 3rd-order tensors, i.e., cores.
Then in step~4, in order to update the cores $Q$ and $U$ by $W_\Omega$ and
$V_\Omega\T$, respectively, and the diagonal matrix $\Sigma$ by $S_r$, we
transform the result of step~3 into the form of step~1 by left and right
multiplying $W$ and $V\T$, respectively, by $\Omega\inv$.
Note that, although the resulting form resembles the form of step~1,
the former is not in canonical form yet.
Therefore, we update in step~5 both cores $Q_\star$ and $U_\star$, and both
diagonal matrices $\Sigma_\star$ and $\Omega_\star$ to bring the updated iTR2
again in canonical form.

%%% FIGURE %%%
\begin{figure}[hbtp]
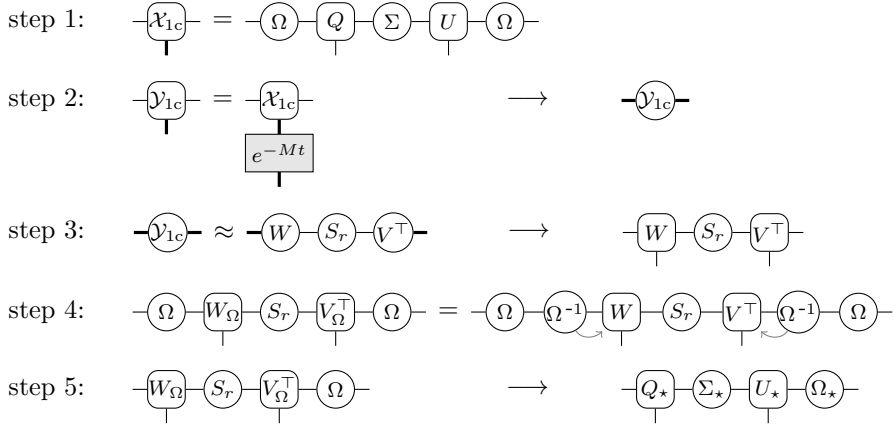

\begin{align*}
& \mbox{step 1:} && \figname{iTR2-matvec-step1}%
\mytikzmatvecstepA{$\cX_{1\can}$}{$Q$}{$U$}{$\Sigma$}{$\Omega$} \\[5pt]
& \mbox{step 2:} && \figname{iTR2-matvec-step2}%
\mytikzmatvecstepB{$\cY_{1\can}$}{$\cX_{1\can}$}{$e^{-Mt}$} \\[5pt]
& \mbox{step 3:} && \figname{iTR2-matvec-step3}%
\mytikzmatvecstepC{$\cY_{1\can}$}{$W$}{$S_r$}{$V\T$} \\[5pt]
& \mbox{step 4:} && \figname{iTR2-matvec-step4}%
\mytikzmatvecstepD{$W_\Omega$}{$V_\Omega\T$}{$S_r$}{$\Omega$}%
{$\Omega^{\,\mbox{-}1}$}{$W$}{$S_r$}{$V\T$} \\[5pt]
& \mbox{step 5:} && \figname{iTR2-matvec-step5}%
\mytikzmatvecstepE{$W_\Omega$}{$V_\Omega\T$}{$S_r$}{$\Omega$}%
{$Q_\star$}{$U_\star$}{$\Sigma_\star$}{$\Omega_\star$}
\end{align*}
\vspace{-10pt}
\caption{The odd part of the matrix--vector operation $\bfy = e^{-\bH t}\bfx$,
where $\bfx$ is given in canonical iTR2 form.
Note that the even part corresponds to interchanging the cores $Q$ and $U$,
and the diagonal matrices $\Sigma$ and $\Omega$, respectively.
(1) Forming the canonical center supercore.
(2) Applying the matrix exponential followed by reshaping the result into
    a matrix.
(3) Performing a truncated SVD and reshaping the singular vectors into cores.
(4) Updating the cores and the central diagonal matrix, while keeping $\Omega$
    unchanged.
(5) Computing the canonical iTR2 form.}
\label{fig:matvec-step}
\end{figure}

We now have all components to state the method for computing the smallest
eigenvalue of \eqref{eq:H}.
Due to the approximation made by the Suzuki--Trotter splitting of $e^{-\bH t}$
and the rank truncation for splitting a supercore into 2 normal cores,
the multiplication $\bfy = e^{-\bH t}\bfx$ is inexact.
Even when $t$ is fixed, the error introduced by the rank truncation is different
in each iteration and results in the application of a slightly different
operator every iteration.
\algref{alg:fpi-can} shows the outline of the power method for
computing the smallest eigenvalue of \eqref{eq:H}.
This algorithm takes a canonical iTR2 as starting vector and returns the
Rayleigh quotient and its corresponding eigenvector in canonical iTR2 form.
In every iteration of \algref{alg:fpi-can}, we apply the matrix exponential
$e^{-Mt}$ twice, according to \figref{fig:matvec-step}, followed by the
computation of the Rayleigh quotient and residual.

%%% ALGORITHM %%%
\begin{algorithm2e}[hbtp]
\caption{iTR power method}%
\label{alg:fpi-can}%
\SetKwInOut{input}{Input}
\SetKwInOut{output}{Output}
\BlankLine
\input{Canonical iTR2 with cores $Q,U \inR^{r \times d \times r}$ and $\Sigma,\Omega \inRR{r}$\\
Nearest neighbor interaction matrix $M$ and initial timestep $t$}
\output{Rayleigh quotient and corresponding eigenvector in canonical iTR2}
\BlankLine
\While {not converged} {
\nl  Apply $e^{-Mt}$ to supercore $\cX_{1\can}$
     according to \figref{fig:matvec-step}. \\
\nl  Apply $e^{-Mt}$ to supercore $\cX_{2\can}$
     according to \figref{fig:matvec-step}. \\
\nl  Compute Rayleigh quotient $\theta$ as in \eqref{eq:iTR2c-rq}. \\
\nl  Check convergence based on residual \eqref{eq:iTR2c-res-th}. \\
\nl  Update $t$.
}
\end{algorithm2e}

As we will show in the numerical experiments, the time step $t$ can have a
significant effect on the convergence and accuracy of the Rayleigh quotient.
On the one hand, when $t$ is chosen very small then the eigenvalues of 
$e^{-\bH t}$ are close to $1$ and, in turn, the gap between the smallest and the second smallest eigenvalue becomes small, leading to a 
high number of iterations.
On the other hand, if the time step is chosen too large, the convergence of
\algref{alg:fpi-can} stagnates prematurely due to the dominating Suzuki--Trotter
splitting error.
It is therefore recommended to adapt and decrease the time step $t$ in the
course of the algorithm.
By starting with a relative large $t$, the power method will stagnate in a
small number of iterations.
We can use the iTR2 residual for detecting stagnation.
Next, we decrease $t$ and continue the power method with the already
obtained approximate eigenvector.
By updating $t$ and using more accurate approximate eigenvectors for
smaller $t$, we will be able to reduce the total number of iterations.

% ==================================== %
% === Computational considerations === %
% ==================================== %
\subsection{Computational considerations}

Most operations of \algref{alg:fpi-can} can be efficiently implemented
involving only 3rd-order tensors of size $r \times d \times r$, such as
steps~1--4 in \figref{fig:matvec-step} and the Rayleigh quotient.
On the other hand, step~5 in \figref{fig:matvec-step} and the computation
of the residual involve transfer matrices of size $r^2 \times r^2$.
The former requires computing dominant eigenvectors of the left and right
transfer matrices and the latter needs linear systems solves involving the
same matrices.
In practice, we can use the relation~\eqref{eq:vectrick} 
to avoid forming these $r^2 \times r^2$ matrices explicitly and rewrite
everything in terms of only $r \times r$ matrices.
However, even for low values of $r \approx 10$, more than 90\% of the wall clock
time is still spent on these operations involving the transfer matrices.
Therefore, we will make use of iterative methods for computing the dominant
eigenvectors and the residual.
We could choose not to compute the residual in every iteration but the cost
for transforming the updated iTR2 to canonical form remains.

Therefore, we also propose a second variant of the power method
that only works with standard iTR2s, i.e., ignoring step~5 in
\figref{fig:matvec-step} such that the updated cores are $Q_\star := W_\Omega$
and $U_\star := V_\Omega\T$, and the updated diagonal matrices are 
$\Sigma_\star := S_r$ and $\Omega_\star := \Omega$.
Note that in this case only 1 of the 2 diagonal matrices is updated, hence
the resulting iTR2 will, in general, not satisfy all the orthogonality
conditions of \defref{def:iTR-can2}.
However, due to the alternating singular value decompositions on
$\cY_{1\can}$ and $\cY_{2\can}$, we observe that the resulting iTR2 converges
to canonical form.
As a result, we can still use the relatively cheap canonical Rayleigh
quotient \eqref{eq:iTR2c-rq} compared to the standard Rayleigh quotient
\eqref{eq:rq2} which would again require the computation of dominant
eigenvectors of the transfer matrices.

Note that in the iTEBD algorithm described in \cite{vida2007,orvi2008}
the matrix-vector operation $\bfy = e^{-\bH t} \bfx$ is implemented in
a slightly different way.
Instead of bringing the updated iTR2 to canonical form, as done in step~5 in
\figref{fig:matvec-step}, the iTR2 is directly transformed to canonical form
right after step~2 and before the truncation is applied in step~3.
Hence, the resulting iTR2 is only approximately in canonical form.

% ============================================================================ %
% === AUXILIARY PLOT FUNCTIONS =============================================== %
% ============================================================================ %
\newcommand{\myplotiter}[3][5]{%
\begin{tikzpicture}
\begin{loglogaxis}[
  width=0.48\textwidth,
  #3,
  legend pos=north east,
  legend style={draw=none,fill=none}
]
\ifnum#1>0
  \addplot[myStyOne] table[x index=1,y index=5] {\datfile{#2-tau=1e-01}};
  \addplot[myStyRes] table[x index=1,y index=4] {\datfile{#2-tau=1e-01}};
\fi
\ifnum#1>1
  \addplot[myStyTwo] table[x index=1,y index=5] {\datfile{#2-tau=1e-02}};
  \addplot[myStyRes] table[x index=1,y index=4] {\datfile{#2-tau=1e-02}};
\fi
\ifnum#1>2
  \addplot[myStyThr] table[x index=1,y index=5] {\datfile{#2-tau=1e-03}};
  \addplot[myStyRes] table[x index=1,y index=4] {\datfile{#2-tau=1e-03}};
\fi
\ifnum#1>3
  \addplot[myStyFou] table[x index=1,y index=5] {\datfile{#2-tau=1e-04}};
  \addplot[myStyRes] table[x index=1,y index=4] {\datfile{#2-tau=1e-04}};
\fi
\ifnum#1>4
  \addplot[myStyFiv] table[x index=1,y index=5] {\datfile{#2-tau=1e-05}};
  \addplot[myStyRes] table[x index=1,y index=4] {\datfile{#2-tau=1e-05}};
\fi
\ifnum#1>5
  \addplot[myStySix] table[x index=1,y index=5] {\datfile{#2-tau=1e-06}};
  \addplot[myStyRes] table[x index=1,y index=4] {\datfile{#2-tau=1e-06}};
\fi
\legend{,$\norm{R}$}
\end{loglogaxis}
\end{tikzpicture}%
}
% ============================================================================ %
\newcommand{\myplottime}[3][5]{%
\begin{tikzpicture}
\begin{semilogyaxis}[
  width=0.48\textwidth,
  #3,
  legend pos=north east,
  legend style={draw=none,fill=none}
]
\ifnum#1>0
  \addplot[myStyOne] table[x index=2,y index=5] {\datfile{#2-tau=1e-01}};
  \addplot[myStyRes] table[x index=2,y index=4] {\datfile{#2-tau=1e-01}};
\fi
\ifnum#1>1
  \addplot[myStyTwo] table[x index=2,y index=5] {\datfile{#2-tau=1e-02}};
  \addplot[myStyRes] table[x index=2,y index=4] {\datfile{#2-tau=1e-02}};
\fi
\ifnum#1>2
  \addplot[myStyThr] table[x index=2,y index=5] {\datfile{#2-tau=1e-03}};
  \addplot[myStyRes] table[x index=2,y index=4] {\datfile{#2-tau=1e-03}};
\fi
\ifnum#1>3
  \addplot[myStyFou] table[x index=2,y index=5] {\datfile{#2-tau=1e-04}};
  \addplot[myStyRes] table[x index=2,y index=4] {\datfile{#2-tau=1e-04}};
\fi
\ifnum#1>4
  \addplot[myStyFiv] table[x index=2,y index=5] {\datfile{#2-tau=1e-05}};
  \addplot[myStyRes] table[x index=2,y index=4] {\datfile{#2-tau=1e-05}};
\fi
\ifnum#1>5
  \addplot[myStySix] table[x index=2,y index=5] {\datfile{#2-tau=1e-06}};
  \addplot[myStyRes] table[x index=2,y index=4] {\datfile{#2-tau=1e-06}};
\fi
\legend{,$\norm{R}$}
\end{semilogyaxis}
\end{tikzpicture}%
}
% ============================================================================ %

%%%%%%%%%%%%%%%%%%%%%%%%%%%%%%%
%%%  NUMERICAL EXPERIMENTS  %%%
%%%%%%%%%%%%%%%%%%%%%%%%%%%%%%%
\section{Numerical experiments}
\label{sec:exp}

In this section we illustrate the power method, outlined in
\algref{alg:fpi-can}, for 3 examples:
the one-dimensional Ising model with a transverse field,
the spin $S = 1$ Heisenberg isotropic antiferromagnetic model, and
the spin $S = 1/2$ Heisenberg model.

All numerical experiments are performed in MATLAB version 9.6.0 (R2019a) on
a MacBook Pro running an Intel(R) Core(TM) i7-5557U CPU @ 3.1 GHz
dual core processor with 16 GB RAM.
The dominant eigenvectors of the transfer matrices are computed via
\texttt{eigs} and the linear systems for computing the residual in
\eqref{eq:iTR2c-res-linsolve} are solved via \texttt{gmres} with
tolerance 1e$-8$.

% --- One-dimensional transverse field Ising model --- %
\subsection{One-dimensional transverse field Ising model}

We start with the one-dimensional transverse field Ising model and
corresponding infinite-dimensional Hamiltonian
\begin{equation}
\bH = -\sum_{k=-\infty}^{+\infty}
  \cdots \otimes I \otimes \sigma_z \otimes \sigma_z \otimes I \otimes \cdots
- g \sum_{k=-\infty}^{+\infty}
  \cdots \otimes I \otimes \sigma_x \otimes I \otimes \cdots
\label{eq:TFI}
\end{equation}
where
$\sigma_x = \begin{bsmallmatrix} 0 & 1 \\ 1 & 0 \end{bsmallmatrix}$ and
$\sigma_z = \begin{bsmallmatrix} 1 & 0 \\ 0 &-1 \end{bsmallmatrix}$
are the 1st and 3rd Pauli matrices,
respectively, and the following $4 \times 4$ symmetric matrix
\[
M =
\begin{bmatrix}
 -1 & -g &  0 &  0 \\
 -g &  1 &  0 &  0 \\
  0 &  0 &  1 & -g \\
  0 &  0 & -g & -1
\end{bmatrix}
\]
as nearest neighbor interaction matrix in the notation of \eqref{eq:H}.
We choose the parameter $g = 2$.
The exact solution of the smallest eigenvalue of \eqref{eq:TFI} is
\cite{pfeu1970}
\begin{align*}
\lambda_0(g) &= -\frac{1}{2\pi} \int_{-\pi}^{\pi} \sqrt{1+g^2-2g\cos x}\,dx, &
\lambda_0(2) &\approx -2.127\,088\,819\,946\,730.
\end{align*}

In the first experiment, we compare the 2 variants of \algref{alg:fpi-can},
i.e., the canonical variant where we always convert the result of applying
$e^{-Mt}$ back into canonical iTR2 form and the non-canonical variant where
we skip step~5 in \figref{fig:matvec-step}.
In order to improve the numerical stability in the latter one, we normalize
the diagonal matrices to have unit Frobenius norm.
\figref{fig:ex-iTRc-vs-iTR} shows the Rayleigh quotient $\theta$ and its
difference with the exact solution as a function of the iteration count.
Note that the results for both variants of \algref{alg:fpi-can} are
indistinguishable, hence, we will only use the faster non-canonical variant
in the remainder of this section.

%%% FIGURE %%%
\begin{figure}[hbtp!]
\centering%
\subfloat[Comparison of the canonical and non-canonical variants of
          \algref{alg:fpi-can} for time step $t = 0.01$.%
          \label{fig:ex-iTRc-vs-iTR}]{%
\figname{ex-iTR-vs-iTRc-lam}%
\begin{tikzpicture}
\begin{axis}[
  width=0.48\textwidth,
  xmin=0, xmax=100,
  ymin=-2.3, ymax=-0.7,
  xlabel={iteration},
  ylabel={$\theta$},
  legend pos=north east,%
  legend style={draw=none,fill=none},%
]
\addplot[black,densely dashed] coordinates {(  0,-2.127088819946730)
                                            (100,-2.127088819946730)};
\addplot[myColOne,only marks,mark=x] table[x index=0,y index=3]%
 {\datfile{ex-ising2-can-r10-tau=1e-02}};
\addplot[myColTwo,only marks,mark=+] table[x index=0,y index=3]%
 {\datfile{ex-ising2-std-r10-tau=1e-02}};
\legend{,iTR2c,iTR2}
\end{axis}
\end{tikzpicture}%
\hfill%
\figname{ex-iTR-vs-iTRc-res}%
\begin{tikzpicture}
\begin{semilogyaxis}[
  width=0.48\textwidth,
  xmin=0, xmax=100,
  ymin=3e-5, ymax=3e0,
  xlabel={iteration},
  ylabel={$\abs{\lambda_0 - \theta}$},
  legend pos=north east,%
  legend style={draw=none,fill=none},%
]
\addplot[myColOne,only marks,mark=+] table[x index=0,y index=5]%
 {\datfile{ex-ising2-can-r10-tau=1e-02}};
\addplot[myColTwo,only marks,mark=x] table[x index=0,y index=5]%
 {\datfile{ex-ising2-std-r10-tau=1e-02}};
\legend{iTR2c,iTR2}
\end{semilogyaxis}
\end{tikzpicture}%
}\\%
\subfloat[Influence of the time step $t$ on the convergence of
          \algref{alg:fpi-can}.\label{fig:ex-fixed}]{%
\figname{ex-fixed}%
\begin{tikzpicture}
\begin{semilogyaxis}[
  width=0.98\textwidth,
  height=0.41\textwidth,
  xmin=0, xmax=5000,
  ymin=1e-8, ymax=1e0,
  xtick={0,500,1000,2000,...,5000},
  extra x ticks={100,200,300,400},
  extra x tick labels={},
  ytick={1e-8,1e-6,1e-4,1e-2,1e0},
  xlabel={iteration},
  ylabel={$\abs{\lambda_0 - \theta}$},
  legend pos=north east,%
  legend style={draw=none,fill=none},%
]
\addplot[myColOne,very thick] table[x index=1,y index=5]%
 {\datfile{ex-ising2-fixed1e3-r10-tau=1e-01}};
\addplot[myColTwo,very thick] table[x index=1,y index=5]%
 {\datfile{ex-ising2-fixed1e3-r10-tau=1e-02}};
\addplot[myColThr,very thick] table[x index=1,y index=5]%
 {\datfile{ex-ising2-fixed1e3-r10-tau=1e-03}};
\addplot[myColFou,very thick] table[x index=1,y index=5]%
 {\datfile{ex-ising2-fixed1e3-r10-tau=1e-04}};
\addplot[myColFiv,very thick] table[x index=1,y index=5]%
 {\datfile{ex-ising2-fixed1e3-r10-tau=1e-05}};
\addplot[myColOne,very thick] table[x index=1,y index=5]%
 {\datfile{ex-ising2-fixed1e2-r10-tau=1e-01}};
\addplot[myColTwo,very thick] table[x index=1,y index=5]%
 {\datfile{ex-ising2-fixed1e2-r10-tau=1e-02}};
\addplot[myColThr,very thick] table[x index=1,y index=5]%
 {\datfile{ex-ising2-fixed1e2-r10-tau=1e-03}};
\addplot[myColFou,very thick] table[x index=1,y index=5]%
 {\datfile{ex-ising2-fixed1e2-r10-tau=1e-04}};
\addplot[myColFiv,very thick] table[x index=1,y index=5]%
 {\datfile{ex-ising2-fixed1e2-r10-tau=1e-05}};
\legend{$t = 1$e$-1$,$t = 1$e$-2$,$t = 1$e$-3$,$t = 1$e$-4$,$t = 1$e$-5$}
\end{semilogyaxis}
\end{tikzpicture}%
}\\%
\subfloat[Convergence as a function of iteration count $i$ and total time $T = \sum_i t_i$.%
\label{fig:ex-ising2-tau}]{%
\figname{ex-ising2-t-iter}\myplotiter[5]{ex-ising2-r10}%
{
  xmin=1e0, xmax=1e6,
  ymin=1e-12, ymax=1e0,
  xlabel={iteration},
  ylabel={$\abs{\lambda_0 - \theta}$},
  xtick={1e0,1e2,1e4,1e6},
  extra x ticks={1e1,1e3,1e5},
  extra x tick labels={},
  ytick={1e-12,1e-8,1e-4,1e0},
  extra y ticks={1e-10,1e-6,1e-2},
  extra y tick labels={}
}%
\hspace{0.01\textwidth}%
\figname{ex-ising2-t-time}\myplottime[5]{ex-ising2-r10}%
{
  xmin=0, xmax=8,
  ymin=1e-12, ymax=1e0,
  xlabel={time $T$},
  ylabel={$\abs{\lambda_0 - \theta}$},
  xtick={0,2,...,8},
  extra x ticks={1,3,...,7},
  extra x tick labels={},
  ytick={1e-12,1e-8,1e-4,1e0},
  extra y ticks={1e-10,1e-6,1e-2},
  extra y tick labels={},
}%
}\\%
\caption{One-dimensional transverse field Ising model with $g = 2$
and iTR rank $r = 10$.\label{fig:ex-ising2}}
\end{figure}
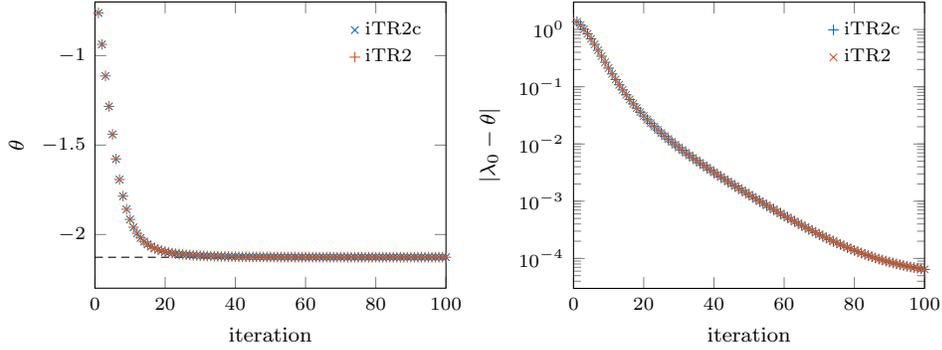
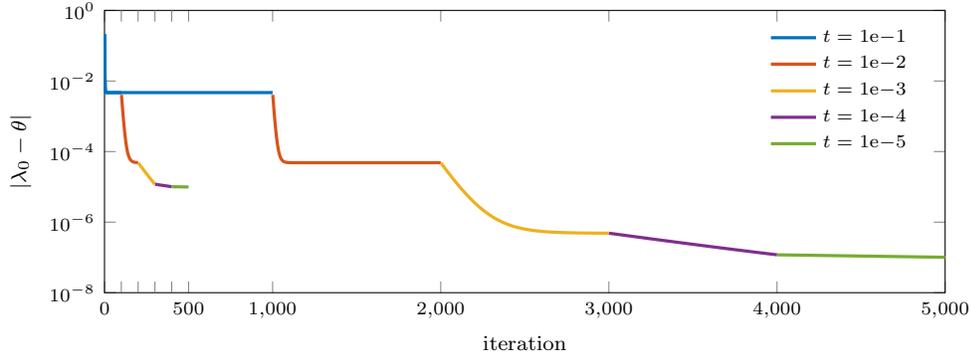
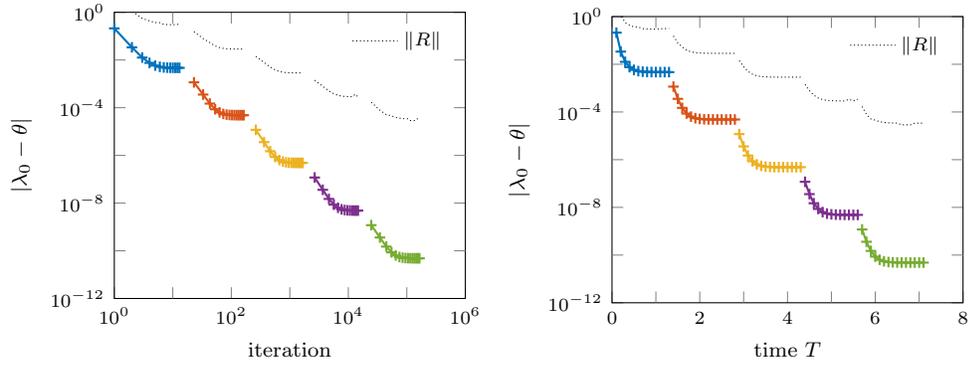

Next, we analyze the effect of the time step $t$ on the convergence
of \algref{alg:fpi-can}.
\figref{fig:ex-fixed} depicts the convergence of 2 different runs where we
respectively decreased the time step $t$ after every 100 or 1000 iterations.
Note that different time steps are indicated by different colors.
From this figure it is clear that the time step control
can have a significant effect on the overall attained accuracy.
When $t$ is decreased too soon, as in the first run, the power method
tend to stagnate and further decreasing $t$ does not help.
On the other hand, we do not want to run too many iteration and therefore
we will use the iTR residual, introduced in \secref{sec:res},
to detect stagnation.

%%% TABLE %%%
\begin{table}[b!]
\label{tab:ising}\centering\footnotesize%
\caption{Wall clock times for one-dimensional transverse field Ising model
         with $g = 2$ and iTR rank $r = 10$.}
\begin{tabularx}{0.6\columnwidth}{C%
|S[table-format=7]S[table-format=3.2]%
|S[table-format=7]S[table-format=3.2]}
\toprule
$t$ & \multicolumn{1}{c}{iter} & \multicolumn{1}{c|}{time [s]}
    & \multicolumn{1}{c}{iter} & \multicolumn{1}{c}{time [s]} \\
\midrule
1e$-$1  &          &           &      13  &     0.87 \\
1e$-$2  &          &           &     150  &     0.72 \\
1e$-$3  &          &           &    1500  &     1.40 \\
1e$-$4  &          &           &   13000  &     4.80 \\
1e$-$5  &  370000  &   122.52  &  150000  &    44.87 \\
\midrule
Total   &  370000  &   122.52  &  164663  &    52.67 \\
\bottomrule
\end{tabularx}
\end{table}

As discussed in \secref{sec:alg}, the total number of iterations can be
reduced by gradually decreasing the time step $t$.
\tabref{tab:ising} shows that running \algref{alg:fpi-can} with a fixed
time step $t = 1$e$-5$ requires at least twice the number of iterations and
wall clock time than the adaptive time step approach, where $t$ was divided
by 10 when the first 3 digits of the residual norm $\norm\res$ were not changing
anymore in 3 consecutive convergence checks or in case the residual increased.
The left plot in \figref{fig:ex-ising2-tau} shows the corresponding convergence
as a function of the iteration count.
Note that by using the automatic time step adaption approach, the accuracy
of the Ritz value is multiple orders of magnitude higher than achieved in
both runs shown in \figref{fig:ex-fixed}.
The right plot in \figref{fig:ex-ising2-tau} shows the same results as
the left one but in function of the total time $T = N t$, with $N$ the
number of iterations of \algref{alg:fpi-can} for each $t$.
Remark that the stagnation happens for every time step more or less at the same
total time $T$, corresponding to roughly speaking the same power of $e^{-\bH}$.
We will use this observation in the next numerical experiments to change the
frequency of the convergence check based on the current time step.

% --- Spin $S=1$ Heisenberg isotropic antiferromagnetic model --- %
\subsection{Spin $S=1$ Heisenberg isotropic antiferromagnetic model}

As a second example, we consider the isotropic antiferromagnetic case of the
spin $S = 1$ Heisenberg model.
The resulting Hamiltonian has the form of \eqref{eq:H}, with the following
nearest neighbor interaction matrix
\[
M_{k,k+1} = X_k \otimes X_{k+1} + Y_k \otimes Y_{k+1}
                         + \Delta Z_k \otimes Z_{k+1},
\]
where $\Delta = 1$, and $X_k$, $Y_k$, and $Z_k$ the spin-1 generators of SU(2)
\begin{align*}
X_k &= \frac{1}{\sqrt{2}}
\begin{bmatrix}
0 & 1 & 0\\
1 & 0 & 1\\
0 & 1 & 0
\end{bmatrix}, &
Y_k &= \frac{1}{\sqrt{2}}
\begin{bmatrix}
 0 & -\I &   0\\
\I &   0 & -\I\\
 0 &  \I &   0
\end{bmatrix}, &
Z_k &=
\begin{bmatrix}
1 & 0 &  0\\
0 & 0 &  0\\
0 & 0 & -1
\end{bmatrix}.
\end{align*}
The smallest eigenvalue is $\lambda_0 \approx -1.401\,484\,038\,971\,2(2)$
\cite{haci2011}.

% ============================================================================ %
\newcommand{\myplottimeafm}[2][5]{%
\myplottime[#1]{#2}{%
  xmin=0, xmax=45,%
  ymin=3e-9, ymax=1e0,%
  xlabel={time $T$},%
  ylabel={$\abs{\lambda_0 - \theta}$},%
  xtick={0,10,...,40}}%
}
% ============================================================================ %

%%% FIGURE %%%
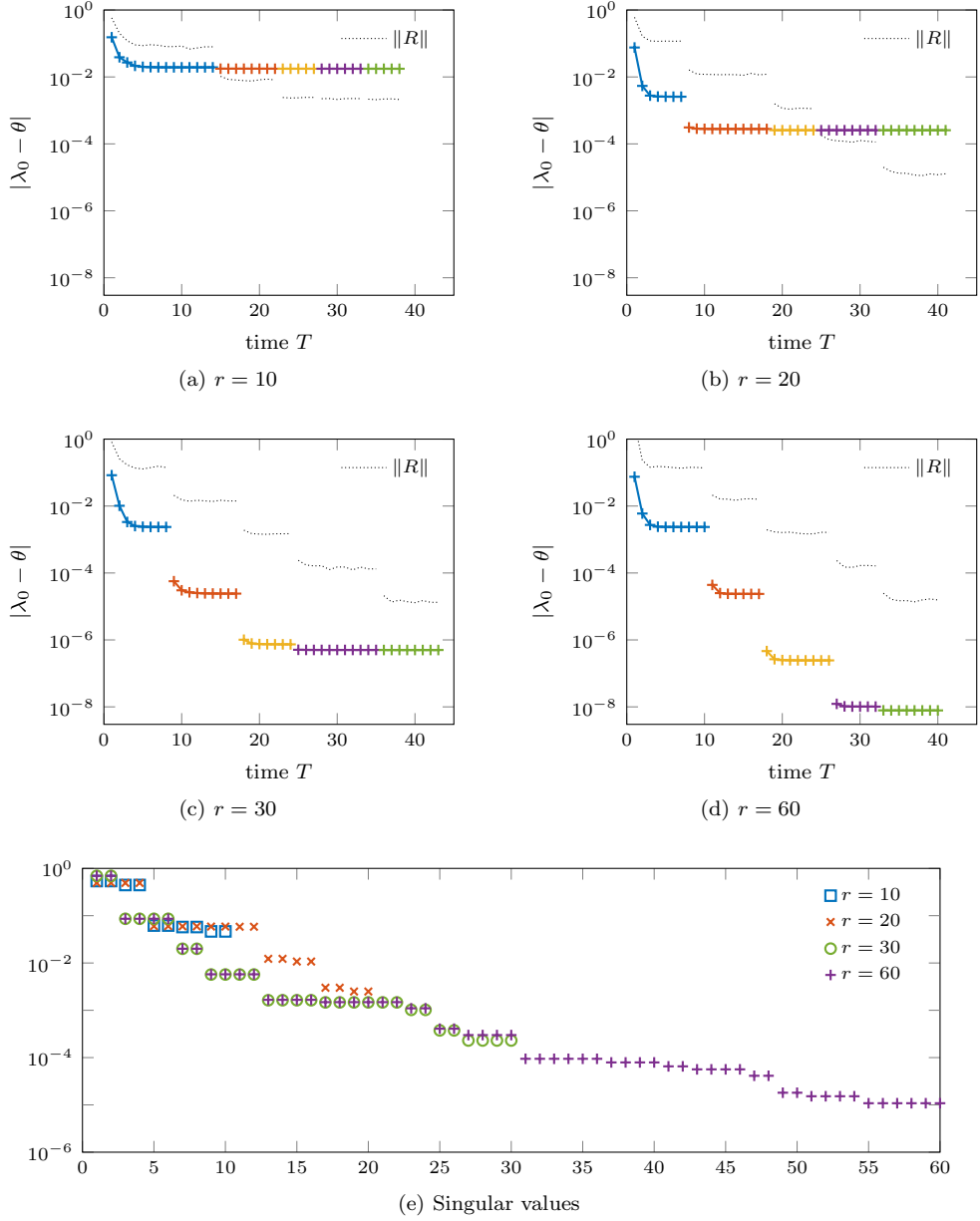
\begin{figure}[hbtp!]
\centering%
\subfloat[$r = 10$\label{fig:ex-afm-r10}]{%
\figname{ex-afm-res-r10}\myplottimeafm[5]{ex-afm-r10}%
}\hfill%
\subfloat[$r = 20$\label{fig:ex-afm-r20}]{%
\figname{ex-afm-res-r20}\myplottimeafm[5]{ex-afm-r20}%
}\\%
\subfloat[$r = 30$\label{fig:ex-afm-r30}]{%
\figname{ex-afm-res-r30}\myplottimeafm[5]{ex-afm-r30}%
}\hfill%
\subfloat[$r = 60$\label{fig:ex-afm-r60}]{%
\figname{ex-afm-res-r60}\myplottimeafm[5]{ex-afm-r60}%
}\\%
\subfloat[Singular values\label{fig:ex-afm-svd}]{%
\figname{ex-afm-svd}%
\begin{tikzpicture}
\begin{semilogyaxis}[
  width=\textwidth,
  height=0.4125\textwidth,
  xmin=0, xmax=60,
  ymin=1e-6, ymax=1e0,
  ytick={1e-6,1e-4,1e-2,1e0},
  extra y ticks={1e-5,1e-3,1e-1},
  extra y tick labels={},
  legend pos=north east,%
  legend style={draw=none,fill=none},%
]
\addplot[only marks,myColOne,thick,mark=square] table[x index=0,y index=1]%
 {\datfile{ex-afm-r10-svd}};
\addplot[only marks,myColTwo,thick,mark=x] table[x index=0,y index=1]%
 {\datfile{ex-afm-r20-svd}};
\addplot[only marks,myColFiv,thick,mark=o] table[x index=0,y index=1]%
 {\datfile{ex-afm-r30-svd}};
\addplot[only marks,myColFou,thick,mark=+] table[x index=0,y index=1]%
 {\datfile{ex-afm-r60-svd}};
\legend{$r = 10$,$r = 20$,$r = 30$,$r = 60$}
\end{semilogyaxis}
\end{tikzpicture}%
}%
\caption{Spin $S=1$ Heisenberg isotropic antiferromagnetic model.%
\label{fig:ex-afm}}
\end{figure}

\figref{fig:ex-afm-r10,fig:ex-afm-r20,fig:ex-afm-r30,fig:ex-afm-r60}
show the convergence of \algref{alg:fpi-can} for different iTR ranks
$r = 10$, 20, 30, and 60, respectively.
For every case, we started with a time step $t = 0.1$ and the time step is
divided by a factor of 10 whenever residual stagnation is detected with the
criteria stated in the previous example.
The frequency of the convergence check was set to $1/t$ in order to avoid
computing to many (expensive) residual calculations.
The corresponding wall clock time for the different runs are given in
\tabref{tab:afm}.
This table illustrates again that the smaller the time step $t$ the
more iterations are required, hence, the larger the wall clock time.

When the rank of the iTR is too small, we see from \figref{fig:ex-afm} that the
Rayleigh quotient does not further improve even when the time step is reduced
because the SVD truncation error is too large.
On the other hand, we observe from
\figref{fig:ex-afm-r20,fig:ex-afm-r30,fig:ex-afm-r60} that the residual
norms in all these runs behave in a similar fashion.
Since the iTR2 residual \eqref{eq:iTR2c-res-th} is a projected residual,
we cannot use it as the only metric for deciding when to terminate the
power iteration.
As illustrated in \figref{fig:ex-afm-svd}, the singular values given by
the diagonal matrices of \defref{def:iTR-can2} and in particular the
smallest singular value together with the residual are good indicators
for the actual accuracy of the approximate eigenpair of $\bH$ because
the overall accuracy is bounded by both the Suzuki--Trotter splitting
and SVD truncation error.

%%% TABLE %%%
\begin{table}[htbp]
\label{tab:afm}\centering\footnotesize%
\caption{Wall clock times corresponding to
         \figref{fig:ex-afm-r10,fig:ex-afm-r20,fig:ex-afm-r30,fig:ex-afm-r60}.}
\begin{tabularx}{\columnwidth}{C|%
S[table-format=6]S[table-format=4.2]|%
S[table-format=6]S[table-format=4.2]|%
S[table-format=6]S[table-format=4.2]|%
S[table-format=6]S[table-format=4.2]}
\toprule
\multirow{2}{*}{$t$} & \multicolumn{2}{c|}{$r = 10$}
                     & \multicolumn{2}{c|}{$r = 20$}
                     & \multicolumn{2}{c|}{$r = 30$}
                     & \multicolumn{2}{c}{$r = 60$} \\
 & \multicolumn{1}{c}{iter} & \multicolumn{1}{c|}{time [s]}
 & \multicolumn{1}{c}{iter} & \multicolumn{1}{c|}{time [s]}
 & \multicolumn{1}{c}{iter} & \multicolumn{1}{c|}{time [s]}
 & \multicolumn{1}{c}{iter} & \multicolumn{1}{c}{time [s]} \\
\midrule
1e$-$1  &     140  &     0.60  &      70  &     0.71
        &      80  &     1.04  &     100  &     3.83 \\
1e$-$2  &     800  &     0.75  &    1100  &     2.26
        &     900  &     3.28  &     700  &     9.08 \\
1e$-$3  &    5000  &     3.27  &    6000  &     8.63
        &    7000  &    19.60  &    9000  &    96.78 \\
1e$-$4  &   60000  &    33.75  &   80000  &   107.57
        &  110000  &   314.33  &   60000  &   590.71 \\
1e$-$5  &  500000  &   288.10  &  900000  &  1190.97
        &  800000  &  2137.29  &  800000  &  7521.75 \\
\bottomrule
\end{tabularx}
\end{table}

% --- Spin $S=1/2$ Heisenberg model --- %
\subsection{Spin $S=1/2$ Heisenberg model}

As a last example, we consider the spin $S = 1/2$ Heisenberg model.
The resulting Hamiltonian has the form of \eqref{eq:H}, with the following
nearest neighbor interaction matrix
\[
M_{k,k+1} = X_k \otimes X_{k+1} + Y_k \otimes Y_{k+1} + Z_k \otimes Z_{k+1},
\]
where $X_k = \frac{1}{2}\sigma_x$, $Y_k = \frac{1}{2}\sigma_y$,
$Z_k = \frac{1}{2}\sigma_z$, with $\sigma_x$, $\sigma_y$, $\sigma_z$
the Pauli matrices.
The exact solution of the smallest eigenvalue is
\(
\lambda_0 = -\ln(2) + \frac{1}{4} \approx -0.443\,147\,180\,559\,945
\).

In \figref{fig:ex-spin-theta12}, we plot the convergence history of
the Rayleigh quotient $\theta$ to the exact eigenvalue $\lambda_0$ as well
as the changes in the first ($\theta_1$) and second ($\theta_2$) terms of
\eqref{eq:rq2} (excluding the 1/2 factor) with respect to the total time $T$.
In this calculation, we limit the rank of the iTR to 10 and used a fixed
time step $t = 0.001$.
As we can see, $\theta_1$ approaches to $\lambda_0$ from above, but
$\theta_2$ can fall below $\lambda_0$ and eventually approaches $\lambda_0$
from below. Although $\theta_1$ and $\theta_2$ both converge towards
$\lambda_0$, the convergence is much slower than $\theta$ which is the
average of $\theta_1$ and $\theta_2$.

In addition to approximating the desired eigenvalue by the Rayleigh quotient,
we can obtain another type of approximation by computing the eigenvalue
of the projected Hamiltonian matrix $H_{\neq0}$ defined by \eqref{eq:Htsum}.
Because the summation in $H_{\neq0}$ diverges, the eigenvalue of
$H_{\neq0}$ is not finite. However, the eigenvalue of the average of
$H_{\neq0}$ is finite and serves as an approximation to the average eigenvalue
of $\bH$ defined in \eqref{eq:H}.
To compute the average of $H_{\neq0}$, we can first subtract $\theta\eye$
from each of the terms $H_{-2-\ell}$ and $H_{1+\ell}$ in \eqref{eq:Ht},
for $\ell\geq 1$, where $\theta$ is the Rayleigh quotient.
We then compute the geometric sums
$H_L = H_{-2} + \sum_{\ell=1}^{\infty} (H_{-2-\ell}-\theta\eye)$ and
$H_R = H_1 + \sum_{\ell=1}^{\infty} (H_{1+\ell}-\theta\eye)$, respectively,
using the same techniques as discussed in the proof of \thref{th:iTRc-res}
for the average residual calculation.
We can show that both geometric sums converge.
Next, we sum up $H_L$, $H_{-1}$, $H_0$, and $H_R$ after $\theta\eye$ is
subtracted from each of these terms.
The subtraction of $\theta\eye$ from $H_L$ and $H_R$ are made to account for
the $\theta\eye$'s not subtracted from $H_{-2}$ and $H_{1}$, respectively,
before the geometric sums were performed to yield $H_L$ and $H_R$.
After dividing the sum of all four terms by 4, we add $\theta\eye$ back
to form the total average of $H_{\neq0}$, i.e.,
\[
\Hh_{\neq0} = (H_L + H_{-1} + H_{0} + H_R - 4 \theta\eye)/4 + \theta\eye
            = (H_L + H_{-1} + H_{0} + H_R)/4.
\]
In \figref{fig:ex-spin-theta12}, the curve marked by the diamonds show that
the eigenvalue $\thetah$ of the average $\Hh_{\neq0}$ converges faster to
$\lambda_0$.
However, computing the average $\Hh_{\neq0}$ can be expensive when the rank
of $\bfx$ becomes large.

\Figref{fig:ex-spin-r120} shows how the norm of the average residual and
the difference between the approximate and exact eigenvalue change with respect
to the total time $T$ when the rank of the iTR is allowed to increased to 120.
Even with this much larger rank, it is difficult to reduce the error in the
approximate eigenvalue below $10^{-6}$.

%%% FIGURE %%%
\begin{figure}[hbtp!]
\centering%
\subfloat[Convergence of the iTR Rayleigh quotient and its 2 comprising terms
          ($r = 10$ and $t = 0.001$).%
          \label{fig:ex-spin-theta12}]{%
\figname{ex-spin-theta12}%
\begin{tikzpicture}
\begin{axis}[
  width=\textwidth,
  height=0.5\textwidth,
  xmin=0, xmax=10,
  ymin=-0.5, ymax=0,
  ytick={-0.5,-0.4,-0.3,-0.2,-0.1,0},
  extra y ticks={-0.4431},
  extra y tick labels={$\lambda_0$},
  xlabel={time $T$},
  ylabel={$\theta$},
  legend pos=north east,%
  legend style={draw=none,fill=none},%
]
\addplot[black,densely dashed] coordinates {( 0,-0.4431471805599453)
                                            (10,-0.4431471805599453)};
\addplot[myColOne,only marks,mark=asterisk] table[x index=2,y index=3]%
 {\datfile{ex-spin-theta12-r10-tau=1e-03}};
\addplot[myColTwo,only marks,mark=x] table[x index=2,y index=4]%
 {\datfile{ex-spin-theta12-r10-tau=1e-03}};
\addplot[myColThr,only marks,mark=+] table[x index=2,y index=5]%
 {\datfile{ex-spin-theta12-r10-tau=1e-03}};
\addplot[myColFou,only marks,mark=diamond] table[x index=2,y index=6]%
 {\datfile{ex-spin-theta12-r10-tau=1e-03}};
\legend{,$\theta$,$\theta_1$,$\theta_2$,$\thetah$}
\end{axis}
\end{tikzpicture}%
}\\%
\subfloat[Convergence as a function of total time for $r = 120$.%
          \label{fig:ex-spin-r120}]{%
\figname{ex-spin-res-r120}\myplottime[4]{ex-spin-r120}%
{
  width=0.99\textwidth,
  height=0.5\textwidth,
  xmin=0, xmax=600,
  ymin=1e-7, ymax=1e0,
  xlabel={time $T$},
  ylabel={$\abs{\lambda_0 - \theta}$},
  xtick={0,100,...,600},
  extra x ticks={50,150,...,550},
  extra x tick labels={},
  ytick={1e-6,1e-4,1e-2,1e0},
  extra y ticks={1e-7,1e-5,1e-3,1e-1},
  extra y tick labels={},
}%
}%
\caption{Spin $S=1/2$ Heisenberg model.\label{fig:ex-spin}}
\end{figure}
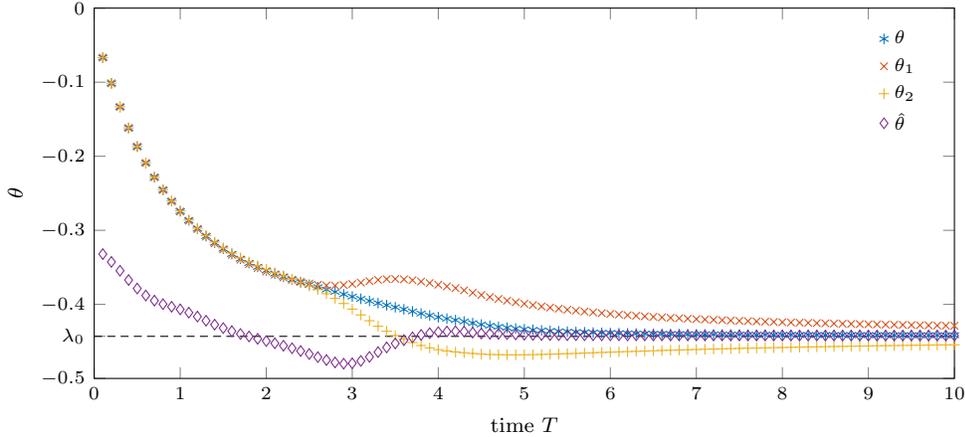
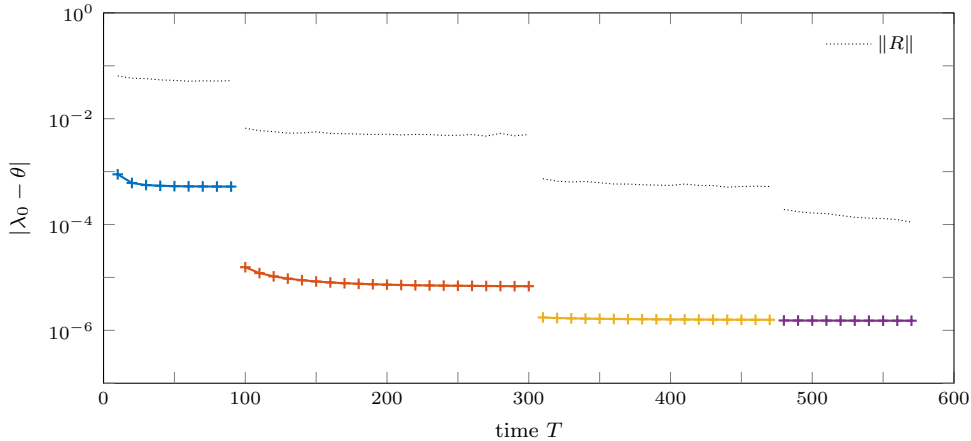

%%%%%%%%%%%%%%%%%%%%%
%%%  CONCLUSIONS  %%%
%%%%%%%%%%%%%%%%%%%%%
\section{Conclusions}
\label{sec:concl}

In this paper, we described a power method for solving
a class of infinite-dimensional tensor eigenvalue problems for an
infinite-dimensional matrix $\bH$ with
a special type of Kronecker product structure, assuming
that the approximate eigenvector can be represented by an infinite
translational invariant tensor ring (iTR).
We presented some algebraic properties of the iTR in terms of its transfer
matrix and showed how to obtain a canonical form, which is convenient for
manipulating an iTR in several calculations.
We showed how the Rayleigh-quotient per site of an iTR $\bfx$ with respect to
the matrix $\bH$ can be efficiently computed.
Although the power method itself is not new,
the purpose of our paper is to clearly formulate this type of problem from
a numerical linear algebra point of view and describe how a standard numerical
algorithm such as the power method can be modified and applied to solve
this type of problem. We have provided algorithmic
and implementation details on how to carry out this procedure
in a practical setting. In particular, we presented a better way to compute
the Rayleigh quotient per site, and pointed out an alternative way to extract possibly
better approximations to the desired eigenvalue. We also developed a practical
and effective way to monitor the convergence of the power iteration
by computing a projected residual estimate. Such a residual estimate
is used to determine when to decrease the parameter $t$ to improve convergence and when
to terminate the power iteration.

We should note that the flexible power iteration is not the only method
for solving this type of infinite-dimensional tensor eigenvalue problems.
Alternative methods include the infinite density matrix renormalization (iDMRG)
method and the variational uniform matrix product states (vUMPS) method.
Due to the infinite dimensionality of the eigenvalue
problem, special care must be exercised to compute quantities such as the
Rayleigh quotient and the residual of an approximate eigenpair.
We also need to take into account the approximation that must be made in the
multiplication of the matrix exponential $e^{-\bH t}$ with an iTR.
A comparison to other algorithms and higher order
Lie--Trotter product formulas will be made in a future study.

\bibliographystyle{siamplain}
\bibliography{references}

\end{document}